\algnewcommand\algorithmicinput{\textbf{Input:}}
\algnewcommand\Input{\item[\algorithmicinput]}%
\algnewcommand\algorithmicoutput{\textbf{Output:}}
\algnewcommand\Output{\item[\algorithmicoutput]}%
\newcommand{\TAU}{{\Xi}}
\newcommand{\monthyear}[1]{%
  \def\@monthyear{\uppercase{#1}}}
\newcommand{\volnumber}[1]{%
  \def\@volnumber{\uppercase{#1}}}
\def\ps@plain{\ps@empty
  \def\@oddfoot{\@monthyear \hfil \thepage}%
  \def\@evenfoot{\thepage \hfil \@volnumber}}
\def\ps@firstpage{\ps@plain}
\def\ps@headings{\ps@empty
  \def\@evenhead{%
    \setTrue{runhead}%
    \def\thanks{\protect\thanks@warning}%
    \uppercase{The Fibonacci Quarterly}\hfil}%
  \def\@oddhead{%
    \setTrue{runhead}%
    \def\thanks{\protect\thanks@warning}%
    \hfill\uppercase{Three Cousins of Recam\'{a}n's Sequence}}%
  \let\@mkboth\markboth
  \def\@evenfoot{%
    \thepage \hfil \@volnumber}%
  \def\@oddfoot{%
    \@monthyear \hfil \thepage}%
  }%
\newcommand{\beql}[1]{\begin{equation}\label{#1}} % used
\newcommand{\eeq}{\end{equation}} % used
\theoremstyle{plain}
\numberwithin{equation}{section}
\newtheorem{thm}{Theorem}[section]
\newtheorem{theorem}[thm]{Theorem}
\newtheorem{lemma}[thm]{Lemma}
\begin{document}
%% replace the values in the next three lines by the correct information
\monthyear{Month Year}
\volnumber{Volume, Number}
\setcounter{page}{1}

\title{Three Cousins of Recam\'{a}n's Sequence}
\author{Max A. Alekseyev}
\address{Department of Mathematics \& Computational Biology Institute\\
George Washington University\\ 
Ashburn, VA  20147, USA}
\email{maxal@gwu.edu}
\author{Joseph Samuel Myers}
\address{c/o Trinity College\\
Cambridge\\
CB2 1TQ, UK}
\email{jsm@polyomino.org.uk}
\author{Richard Schroeppel}
\address{500 South Maple Dr.\\
Woodland Hills\\
 UT 84653, USA}
\email{rcs@xmission.com}
\author{S. R. Shannon}
\address{P.O. Box 2260 \\
Rowville, Victoria 3178\\
AUSTRALIA}
\email{scott\_r\_shannon@hotmail.com}
\author{N. J. A. Sloane}%\footnote{To whom correspondence should be addressed.}
\address{The OEIS Foundation Inc. \\
11 South Adelaide Ave. \\
Highland Park, NJ 08904, USA}
\email{njasloane@gmail.com}
\author{Paul Zimmermann},
\address{INRIA Nancy -- Grand Est, LORIA \\
F-54600 Villers-l\`es-Nancy \\
FRANCE}
\email{Paul.Zimmermann@inria.fr}

\begin{abstract}
Although $10^{230}$ terms of Recam\'{a}n's sequence have been computed, it remains a mystery.  %A5132
Here three distant cousins of that sequence  are described, one of which is
also mysterious.
(i) $\{A(n), n \ge 3\}$ is defined as follows. Start with $n$, and add $n+1$, $n+2$, $n+3, \ldots$,
stopping after adding $n+k$ if the sum $n + (n+1) + \ldots + (n+k)$
is divisible by $n+k+1$. Then $A(n)=k$. We determine $A(n)$ and
show that $A(n) \le n^2 - 2n - 1$.
(ii)  $\{B(n), n \ge 1\}$ is a multiplicative analog of $\{A(n)\}$. Start with $n$,
and successively multiply by $n+1$,  $n+2, \ldots$, stopping after multiplying by $n+k$ if the product
$n(n+1)\cdots (n+k)$  is divisible by $n+k+1$. Then $B(n)=k$. % A332558, T(n) <= 2n for n >= 3
We conjecture that $\log^2 B(n) = (\frac{1}{2}+o(1))  \log n \log\log n$.
(iii) The third sequence, $\{C(n), n \ge 1\}$, is the most interesting, because the most mysterious. %Start with $n$, and successively
Concatenate the decimal digits of $n, n+1, n+2, \ldots$ until the concatenation  $n \| n+1\| \ldots \| n+k$
is divisible by $n+k+1$. Then $C(n)=k$. If no such $k$ exists we set $C(n)=-1$.
We have found $k$ for all $n \le 1000$ except
for two cases. Some of the numbers involved are quite large.
For example, $C(92) = 218128159460$, and the concatenation $92\|93\|\ldots\|(92+C(92))$
is a number with about $2\cdot 10^{12}$ digits. We have only a probabilistic argument that such a $k$
exists for all $n$.
\end{abstract}

\maketitle

%%%%%%%%%%%%%%%%%%%%%%%%%%%%%%%%%%%%%
%%%%%%%%%         Section 1, Introduction
%%%%%%%%%%%%%%%%%%%%%%%%%%%%%%%%%%%%%

\section{Introduction}\label{Sec1}
Recam\'{a}n's sequence $\{R(n), n \ge 0\}$  is defined by $R(0) = 0$ and, for $n \ge 1$,  
$R(n) = R(n-1) - n$ if that number is positive and not already in the sequence,
and otherwise $R(n)=R(n-1)+n$ (in the latter case repeated terms are permitted).
Terms $R(0)$ through $R(11)$ are
$0, 1, 3, 6, 2, 7, 13, 20, 12, 21, 11, 22$.
The sequence was contributed  
by Bernardo Recam\'{a}n Santos in 1991  to what is now
 the {\em On-line Encyclopedia of Integer Sequences}  (or {\em OEIS}) \cite{OEIS}. 
The most basic question about this sequence is still unanswered: does every nonnegative integer appear?
The fifth author (NJAS) and several Bell Labs colleagues developed a method for speeding up the
computation of the sequence, and in 2001 
Allan Wilks used it to compute the first $10^{15}$ terms. At that point every number below 
$852655$ had appeared, but $852655 = 5 \cdot 31 \cdot 5501$ itself was missing.
Benjamin Chaffin  has continued this work, and in 2018  reached $10^{230}$ terms~\cite{CRSW}.
However, $852655$ is still missing.

Thirty years ago it seemed like a very plausible conjecture that every number would eventually 
appear in Recam\'{a}n's sequence.
Today, it is not so clear. For much more about this sequence, see 
entry  A005132\footnote{Six-digit numbers prefixed by A refer to entries in \cite{OEIS}.}
 in \cite{OEIS}.
 
A somewhat similar situation arose in connection with the third of our new sequences, $\{C(n)\}$,
discussed in Sect.~\ref{SecC}. We have no proof that the search for $C(n)$ will always terminate,
 and after reaching
$10^{11}$ in our search for $C(44)$, we were beginning to have doubts.
However, after considerably more computation using a different algorithm 
(described in~\S\ref{Sec4.1}-\S\ref{Sec4.3}) 
we were able to show that 
$C(44) = 2783191412912$.
Similar results for other hard-to-find values of $C(n)$ have convinced us that 
the search for $C(n)$ should always terminate.
%for this problem, $C(n)$ should  always exist.

In Recam\'{a}n's sequence we  start by trying to subtract $n$ from
the previous term. In the three sequences discussed here, to compute
$A(n)$, $B(n)$, or $C(n)$ we define an intermediate
sequence  which starts with $n$
and is extended by either  {\em adding}  ($A(n)$, Section~\ref{SecA}),
{\em multiplying by}  ($B(n)$, Section~\ref{SecB}),
or
{\em concatenating}  ($C(n)$, Section~\ref{SecC}) $n+i$ to the $i$th term to  get the next term.

\paragraph{\it Notation.} A centered dot ($\cdot$)  indicates multiplication.
 In Section~\ref{SecA}, $T_n$ denotes the triangular number $n(n+1)/2$;
 in Sections~\ref{SecB} and \ref{SecC} a vertical bar ($\mid$) means ``divides''
 and  $\nu_p(n)$ denotes the exponent of the highest power of $p$ that divides $n$ (the {\em $p$-adic valuation} of $n$);
 and in Section~\ref{SecC}, $\|$ denotes concatenation of 
 the  decimal (or more generally, base $b$) representations of numbers.
 Also in Section~\ref{SecC} we distinguish between the {\em number} $\alpha \bmod \gamma$
 and the {\em congruence} $\alpha \equiv \beta \pmod{\gamma}$.

%%%%%%%%%%%%%%%%%%%%%%%%%%%%%%%%%%%%%
%%%%%%%%%         Section 2,  A(n)
%%%%%%%%%%%%%%%%%%%%%%%%%%%%%%%%%%%%%

\section{The additive version, $\{A(n)\}$.}\label{SecA}
To find $A(n)$, $n \ge 3$, we define an intermediate sequence $\{a_n(i), i \ge 0\}$ by starting with
$a_n(0)=n$, and, for $i \ge 1$, letting $a_n(i)=a_n(i-1)+n+i$. We stop when we
reach a term $a_n(k)$ which is divisible by $d=n+k+1$, and set $A(n)=k$.
In other words, if the number $d$ that we are about to add to $a_n(k)$ actually divides $a_n(k)$, 
then instead of adding it we stop.

An equivalent definition is that $A(n)$ is the smallest positive integer $k = k(n)$ such that $d(n) = n+k+1$ divides
\beql{EqA1}
a_n(k) ~=~ (k+1)n+\frac{k(k+1)}{2}\,.
\eeq

If $n=3$, for example, the sequence $\{a_3(i)\}$ is $a_3(0)=3$, $a_3(1)=7$, $a_3(2)=12$, and we stop
with $k=2=A(3)$  since $12$ is divisible by $d=3+2+1=6$. For $n=4$ the sequence $\{a_4(i)\}$ is
$4,9,15,22,30,39,49,60$, where we stop with $k=7=A(4)$ since $a_4(7)=60$ is divisible by $d=4+7+1=12$.

Table \ref{TabA} gives the values of $A(n)=k(n)$, $d(n)=n+k(n)+1$, $p(n) = a_n(k)$, and 
$q_n=p(n)/d(n)$ for
$n=3,4,\ldots,17$.
The last column gives the values of a parameter $m$ that will arise when we
 relate this problem to triples of triangular numbers $T_j$.
We start the table at $n=3$, because although we can certainly define
the sequence $\{a_2(i)\}$, it turns out that $a_2(i) = T_{i+2}-1$,
and it is easy to show that $T_{i+2}-1$ is never divisible by $i+3$. So $A(2)$ does not exist.

On the other hand, $A(n)$ exists for all $n \ge 3$.  The record high values of
$A(n)$ in the table at $n=3,4,5,8,17$ suggest that 
$(n-1)^2-2$ is an upper bound.
If we take $k=(n-1)^2-2$
for $n \ge 3$ we find from \eqref{EqA1} that $a_n(k) = (n+1)n(n-1)(n-2)/2$,
which is indeed divisible by $n+k+1 = n(n-1)$,
and so $A(n) \le (n-1)^2-2$.

\begin{table}[htb]
\caption{}\label{TabA}
$$
\begin{array}{rcccccc}
n & A(n)=k(n) & d(n) & p(n)=a_n(k) & q_n=p(n)/d(n)  & m \\
\hline
3  & 2          & 6 & 12           & 2              & 3    \\
4  & 7          & 12 & 60           & 5            & 6    \\
5  & 14          & 20 & 180        & 9           & 10    \\
6  & 3          & 10 & 30           & 3            & 6    \\
7  & 6          & 14 & 70           & 5            & 8    \\
8  & 47          & 56 & 1512       & 27        & 28    \\
9  & 14          & 24 & 240         & 10        & 13    \\
10 & 4          & 15 & 60           & 4           & 10    \\
11 & 10          & 22 & 176           & 8        & 13    \\
12 & 20          & 33 & 462           & 14      & 18    \\
13 & 25          & 39 & 663           & 17       & 21    \\
14 & 11          & 26 & 234           & 9         & 16    \\
15 & 5          & 21 & 105           & 5           & 15    \\
16 & 31          & 48 & 1008           & 21     & 26    \\
17 & 254          & 272 & 36720     & 135    & 136    \end{array}
$$
\end{table}

The sequences $\{A(n)\}$, $\{d(n)\}$, and $\{p(n)\}$ have now been added to \cite{OEIS}: $\{A(n)\}$ is
 A332542. However, to our surprise,
the $\{q_n\}$ sequence appeared to match an existing sequence, although
with a shift in subscripts. 
For $n \ge 2$, let $\TAU(n)$ denote the smallest $k>0$ such that
\beql{EqTT1}
T_n+T_k = T_m
\eeq
for some integer $m$. The initial values are $\TAU(2)=2$,
$\TAU(3)=5$, $\TAU(4)=9, \ldots$ (A082183)
and apparently  agree with $q_{n+1}$.
We will show in Theorem~\ref{ThAT} that this is true.

The representation of numbers as sums or differences of triangular numbers
is a classical subject, going back to Fermat and Gauss, and has been
studied in many recent papers
 \cite{BaWe79, Hag97, LeZa93, Nyb01, Sie63, 
 Tri08, Ula09, Vai72}.
However, we were unable to find Theorems~\ref{ThK} and \ref{ThAT} in the literature.

Following \cite{LeZa93} we define a
{\em triangular triple} to be an ordered triple of nonnegative integers $[n,k,m]$ satisfying \eqref{EqTT1}.
We say that a triple is {\em trivial} if any of $n, k, m$ are  zero. 
% Since $n(n+1)$ is not a homogeneous form there is no notion of primitive triple.

It is easy to see that $\TAU(n)$ exists, 
since it is straightforward to check that $[n, T_n-1, T_n]$ is a triangular 
triple for $n \ge 1$. 
So $\TAU(n) \le T_n-1$.

We will say exactly what all the triangular triples $[n,k,m]$ are for a given $n \ge 1$ 
(this is a consequence of Theorem~\ref{ThNyb}),
and then use this to determine $\TAU(n)$ (Theorem~\ref{ThK}). 

The next theorem is essentially due to  Nyblom~\cite{Nyb01}.
We give a proof since we will use the argument in the proof of Theorem~\ref{ThK}.

\begin{thm}\label{ThNyb}
For a given integer $S \ge 1$, all pairs of nonnegative integers $m$, $k$
such that
\beql{EqNyb1}
S ~=~ T_m - T_k
\eeq
are obtained in a unique way by factorizing $2S$ as a product $d \cdot e$ where $d$ is odd
and $e$ is even,
and taking
\begin{align}
k & ~=~ \frac{ \max(d,e)-\min(d,e)-1}{2} \, , \label{EqTT2a}\\
m & ~=~ \frac{ \max(d,e)+\min(d,e)-1}{2} \,. \label{EqTT2b}
\end{align}
\end{thm}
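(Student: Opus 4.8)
The plan is to work from the identity $8T_n + 1 = (2n+1)^2$, which turns the equation $S = T_m - T_k$ into a difference of squares. Specifically, $T_m - T_k = S$ is equivalent to $(2m+1)^2 - (2k+1)^2 = 8S$, i.e. $(2m+1 - 2k-1)(2m+1+2k+1) = 8S$, which simplifies to $(m-k)(m+k+1) = 2S$. So I would first reduce \eqref{EqNyb1} to the factorization $(m-k)(m+k+1) = 2S$, and observe that the two factors $m-k$ and $m+k+1$ have opposite parities (their sum is $2m+1$, which is odd). Hence exactly one of them is odd; call the odd one $d$ and the even one $e$, so $2S = d\cdot e$ with $d$ odd.

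Next I would solve for $k$ and $m$ in terms of $d$ and $e$. From $\{m-k, m+k+1\} = \{d, e\}$ and the requirement $m \ge k \ge 0$ (nonnegative integers with $T_m \ge T_k$), we need $m+k+1 > m-k$, so $m+k+1 = \max(d,e)$ and $m-k = \min(d,e)$. Adding and subtracting these two equations gives $2m+1 = \max(d,e) + \min(d,e)$ and $2k+1 = \max(d,e) - \min(d,e)$, which rearrange to \eqref{EqTT2b} and \eqref{EqTT2a} respectively. One should check that these indeed yield nonnegative integers: $\max(d,e) - \min(d,e) - 1$ and $\max(d,e)+\min(d,e)-1$ are both even since $d$ and $e$ have opposite parity (so $d\pm e$ is odd, and subtracting $1$ makes it even), and both are $\ge 0$ since $\max \ge \min \ge 1$ as $d, e \ge 1$.

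Finally I would argue the correspondence is a bijection — that is, every such factorization gives a valid pair and distinct factorizations give distinct pairs. For the forward direction, given an odd factor $d$ of $2S$ with complementary factor $e = 2S/d$, defining $k, m$ by \eqref{EqTT2a}, \eqref{EqTT2b} and running the algebra backwards recovers $(m-k)(m+k+1) = d\cdot e = 2S$, hence $T_m - T_k = S$. For uniqueness, note that from a solution $(m,k)$ the \emph{unordered} pair $\{m-k,\,m+k+1\}$ is determined, and since these have opposite parity the odd one is forced to be $d$; thus the map from factorizations $2S = d\cdot e$ ($d$ odd) to solutions is injective, and we have just shown it is surjective.

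The main obstacle is not any single hard step but rather being careful with the bookkeeping: checking that the parity argument is airtight (that $d$ odd and $e$ even forces integrality), handling the degenerate possibilities ($k = 0$, or $d = e$ which cannot occur here since they have opposite parity, or $\min(d,e)=1$ giving the "trivial-adjacent" solution $[n, T_n-1, T_n]$-type relations), and confirming the claimed uniqueness really is uniqueness of \emph{pairs} $(m,k)$ rather than of something coarser. Since the statement is essentially Nyblom's, I expect the proof to be short and mostly a matter of presenting the difference-of-squares reduction cleanly enough that the same manipulation can be reused in the proof of Theorem~\ref{ThK}.
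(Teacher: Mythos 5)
Your proposal is correct and is essentially the paper's own argument: both reduce \eqref{EqNyb1} to the factorization $(m-k)(m+k+1)=2S$, use the opposite parity of the two factors to identify the odd one with $d$, solve the linear system for $k$ and $m$, and get uniqueness because $(m,k)$ determines the unordered pair $\{m-k,\,m+k+1\}$. The only cosmetic difference is that you reach the factorization via the square identity $8T_n+1=(2n+1)^2$ rather than by directly expanding $2S=m(m+1)-k(k+1)$, which changes nothing of substance.
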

\begin{proof}
From \eqref{EqNyb1} we have
$$
2S = m(m+1)-k(k+1) = (m-k)(m+k+1)\,.
$$
Since their sum is odd, $m-k$ and $m+k+1$ are of opposite parity,
and also $m-k<m+k+1$.
%Let $d$ be the odd integer among $m-k$ and $m+k+1$,
Let $d$ be whichever of $m-k$ and $m+k+1$ is odd,
and let $e$ be the other.
Then $m-k= \min(d,e)$, $m+k+1=\max(d,e)$, and
solving for $k$ and $m$ we get \eqref{EqTT2a}, \eqref{EqTT2b}.
The uniqueness follows since conversely $k$ and $m$ determine $d$ and $e$.
\end{proof}

In particular, as Nyblom~\cite{Nyb01} shows, the number of pairs $(m,k)$ such that~\eqref{EqNyb1} holds
is equal to the number of odd divisors of $2S$.

We now take $S = T_n$.  Theorem~\ref{ThNyb} gives all triangular triples $[n,k,m]$ containing $n$.
There are always two obvious factorizations, $2T_n = 1 \cdot n(n+1)$
with $d=1$ and $e=n(n+1)$,
and $2T_n = n\cdot (n+1)$, with  $\{d,e\} = \{n, n+1\}$. The first case leads
to the triple $[n,T_n-1,T_n]$ already mentioned,
and the second leads to the trivial solution $[n,0,n]$.
It follows that the number of nontrivial triangular triples for a 
given $n$ (see A309507) is equal to the number of
odd divisors $d>1$ of $2T_n$.

This result is reminiscent of the fact that the number of primitive Pythagorean triples
with an even  leg $2uv$ is equal to the number
of odd divisors of $2uv$ (cf. \cite{Tri08}, A024361).
%  (although it is usually not the divisor $d=1$ that leads to the trivial solution!).
The nontrivial triangular triples $[n,k,m]$ sorted into lexicographic order are given by
$$
[n, A333530(n), A333531(n)]\,, 
$$
or by
$[n, A198455(n), A198456(n)]$ if we impose the restriction that $k \ge n$.
(Lee and Zafrullah~\cite{LeZa93} also give some tables of triangular triples.)
The numbers $n$ such that there is a triple $[n,n,m]$ are listed in A053141.

The following property and its elegant proof are due to Bradley Klee (personal communication).

\begin{thm}\label{ThTT2}
If $[n,k,m]$ is a triangular triple, then
\beql{EqTT3}
n+k ~\ge ~m\,.
\eeq
Equality holds if and only if $n=0$ or $k=0$.
\end{thm}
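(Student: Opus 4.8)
The plan is to translate the inequality $n+k \ge m$ directly into the factorization language supplied by Theorem~\ref{ThNyb}. Setting $S = T_n$, a triangular triple $[n,k,m]$ corresponds to a factorization $2T_n = n(n+1) = d\cdot e$ with $d$ odd, and by \eqref{EqTT2a}, \eqref{EqTT2b} we have $k = (\max(d,e)-\min(d,e)-1)/2$ and $m = (\max(d,e)+\min(d,e)-1)/2$. Adding these two identities gives $k+m = \max(d,e) - 1$, and hence
\beql{EqProofKM}
m - k ~=~ \min(d,e)\,.
\eeq
So $n+k \ge m$ is equivalent to $n \ge m - k = \min(d,e)$, i.e.\ to the claim that $n$ is at least the smaller of the two factors in any factorization $de = n(n+1)$ with $d$ odd.

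The key step is therefore the elementary fact that if $de = n(n+1)$ then $\min(d,e) \le n$. This is immediate: if both $d > n$ and $e > n$, then $de \ge (n+1)^2 > n(n+1)$, a contradiction. (One does not even need the parity of $d$ here.) This establishes \eqref{EqTT3}.

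For the equality case, $n + k = m$ forces $\min(d,e) = n$, hence $\max(d,e) = n+1$, so $\{d,e\} = \{n,n+1\}$. Since $d$ must be odd, this is a genuine factorization (one of $n$, $n+1$ is odd), and it yields, via \eqref{EqTT2a}, exactly $k = 0$ and $m = n$ — i.e.\ the trivial triple $[n,0,n]$. Conversely, if $k = 0$ then \eqref{EqProofKM} gives $m = \min(d,e) \le n$, while the triple relation $T_n = T_m$ forces $m \ge n$, so $m = n = n+k$; and if $n = 0$ then $T_k = T_m$ forces $k = m$, so again $n+k = m$. Thus equality holds precisely when $n = 0$ or $k = 0$.

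I expect no real obstacle here: the whole argument is a two-line consequence of the parametrization already proved in Theorem~\ref{ThNyb}, the only mild care being needed in the equality analysis to check that the degenerate factorization $\{d,e\}=\{n,n+1\}$ does correspond to a bona fide (trivial) triple and to handle the boundary case $n=0$ separately. Klee's "elegant" proof presumably bypasses even this by observing directly that $T_m = T_n + T_k \le T_n + T_k + (\text{cross term}) $ relates to $(n+k)(n+k+1)/2$; but the factorization route above is self-contained given what precedes.
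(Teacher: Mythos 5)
Your proof is correct, but it takes a genuinely different route from the paper's. The paper records Klee's argument: substitute $x=2n+1$, $y=2k+1$, $z=2m+1$, so that $T_n+T_k=T_m$ becomes $x^2+y^2=z^2+1$; then $x+y\ge\sqrt{z^2+1}\ge z$ gives $n+k\ge m-\tfrac12$, hence $n+k\ge m$ by integrality, and in the equality case $n+k=m$ one compares $n^2+k^2=m^2$ with $(n+k)^2=m^2$ to get $nk=0$. You instead deduce everything from the Nyblom parametrization of Theorem~\ref{ThNyb} with $S=T_n$: from \eqref{EqTT2a}--\eqref{EqTT2b} you get $m-k=\min(d,e)$, and since $de=n(n+1)$ forces $\min(d,e)\le n$ (both factors exceeding $n$ would give $de\ge(n+1)^2$), the inequality follows; equality pins down $\{d,e\}=\{n,n+1\}$, i.e.\ exactly the trivial triple $[n,0,n]$. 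Both arguments are sound. Your route has the merit of being an almost immediate corollary of the machinery the paper has just set up, and it makes the equality case structurally transparent (it identifies which factorization is responsible); its one mild cost, which you do address but only inside the equality discussion, is that Theorem~\ref{ThNyb} requires $S\ge1$, so the case $n=0$ must be treated separately, and strictly speaking that separate remark is also what covers the inequality itself when $n=0$. Klee's proof, by contrast, is self-contained, symmetric in $n$ and $k$, and needs no case split or appeal to the factorization theorem, which is presumably why the paper calls it elegant; your converse direction for $k=0$ could also be shortened to the one-line observation that $T_n=T_m$ forces $m=n$, without routing back through the factorization.
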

\begin{proof}
If we set $x=2n+1$, $y=2k+1$, $z=2m+1$ then \eqref{EqTT1} becomes
$$
x^2+y^2 ~=~ z^2+1\,.
$$
Certainly $[x,y,z]$ is not (quite) a Pythagorean triple, but this equation does suggest using the 
triangle inequality, which yields
$$
x+y ~\ge~ \sqrt{z^2+1} ~>~ z\,,
$$
and so 
$$
n+k ~>~ m-\frac{1}{2}\,,
$$
and \eqref{EqTT3} follows since all the quantities are integers.
If equality holds in \eqref{EqTT3} then $n^2+k^2=m^2$ (from \eqref{EqTT1})
and so $kn=0$.
\end{proof}

We can now apply Theorem~\ref{ThNyb} to determine $\TAU(n)$.

\begin{thm}\label{ThK}
For $n \ge 2$, $\TAU(n)$ is obtained by choosing that odd divisor $d$ of $n(n+1)$ which is 
different from $n$ and $n+1$, and minimizes
\beql{Eqtau1}
\left | d ~-~ \frac{n(n+1)}{d} \right |\,.
\eeq
Then $\TAU(n)$ is is the value of $k$ given by \eqref{EqTT2a} with
this value of $d$ and  $e=n(n+1)/d$. 
\end{thm}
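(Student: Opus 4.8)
The plan is to specialize Theorem~\ref{ThNyb} to the case $S = T_n$, so that $2S = n(n+1)$, and simply read off which values of $k$ are attainable. By Theorem~\ref{ThNyb}, the pairs of nonnegative integers $(m,k)$ with $T_m - T_k = T_n$ are in bijection with the factorizations $n(n+1) = d\cdot e$ in which $d$ is odd, the pair attached to such a factorization being given by \eqref{EqTT2a}--\eqref{EqTT2b}. In particular $k = \tfrac{1}{2}(|d-e|-1)$ depends only on $|d-e|$ and is a strictly increasing function of that quantity; moreover, since $|d-e|$ together with the product $de = n(n+1)$ determines the unordered pair $\{d,e\}$, distinct factorizations yield distinct values of $|d-e|$, hence distinct values of $k$. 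Therefore the least \emph{positive} value $\tau(n)$ of $k$ is produced by the unique factorization $n(n+1) = d\cdot e$ (with $d$ odd) that minimizes $|d-e| = \bigl|\,d - n(n+1)/d\,\bigr|$ among those for which $k>0$, and this is exactly the quantity \eqref{Eqtau1}; feeding the minimizing $d$ and $e = n(n+1)/d$ into \eqref{EqTT2a} then gives $\tau(n)$.

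It remains to identify which factorizations are excluded by the requirement $k>0$. From \eqref{EqTT2a}, $k=0$ holds precisely when $|d-e|=1$, i.e.\ when $d$ and $e$ are consecutive positive integers whose product is $n(n+1)$; since two consecutive positive integers are determined by their product, this forces $\{d,e\}=\{n,n+1\}$. Hence the only factorization to be discarded is $\{d,e\}=\{n,n+1\}$, and discarding it is the same as requiring the odd divisor $d$ to differ from whichever of $n$, $n+1$ is odd --- equivalently, since an odd $d$ cannot equal the even one of $n$, $n+1$, to differ from both $n$ and $n+1$, as in the statement. The set of admissible $d$ is nonempty for $n\ge 2$ because $d=1$ always qualifies (it yields the triple $[n,T_n-1,T_n]$ already noted, so $\tau(n)$ exists), and so the minimum in \eqref{Eqtau1} is attained.

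The argument is essentially bookkeeping on top of Theorem~\ref{ThNyb}; the one point that needs care is the edge-case analysis of the previous paragraph. One must check that the sole factorization producing $k=0$ is $\{d,e\}=\{n,n+1\}$, that the fallback factorization $d=1$ is never inadvertently removed (so the minimization is over a nonempty set and the claimed minimizer is unique), and hence that restricting to odd divisors $d\notin\{n,n+1\}$ excises precisely the degenerate triple and nothing more. Once this is settled, the formula for $k$ in \eqref{EqTT2a} evaluated at the minimizing $d$ delivers $\tau(n)$ directly from the bijection of Theorem~\ref{ThNyb}.
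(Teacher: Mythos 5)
Your proof is correct and follows essentially the same route as the paper: specialize Theorem~\ref{ThNyb} to $S=T_n$, note from \eqref{EqTT2a} that minimizing $k>0$ amounts to minimizing $\bigl|\,d-n(n+1)/d\,\bigr|$ over odd divisors $d$, and observe that the excluded case $k=0$ corresponds exactly to $\{d,e\}=\{n,n+1\}$. Your additional bookkeeping (uniqueness of the minimizer and nonemptiness via $d=1$) is accurate but not needed beyond what the paper's shorter argument already gives.
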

\begin{proof}
From \eqref{EqTT2a} we see that the minimal $k$ is obtained by choosing
$d$ and $e$ so as to minimize $\max(d,e)-\min(d,e)$. But $d$ and $e$ are constrained by $d\cdot e = n(n+1)$.
So we must minimize \eqref{Eqtau1}.
 Since we require $k>0$, we must avoid $d=n$ and $d=n+1$.
\end{proof}

\paragraph{\it Remark.} In a few cases there is no need to do any minimization. For if $n$ is a Mersenne prime, or if $n+1$ 
is a Fermat prime, then the only odd divisor of $n(n+1)$ apart from $n$ or $n+1$
is $d=1$, and we get $\TAU(n) = T_n-1$.

\

We now return to our study of $\{A(n)\}$, and explain the connection with triangular triples.
The agreement of $q_n$ and $\TAU(n-1)$ is no coincidence.

\begin{thm}\label{ThAT}
For $n \ge 3$, $q_n = \TAU(n-1)$.
\end{thm}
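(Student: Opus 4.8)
The plan is to show that computing $q_n$ and computing $\tau(n-1)$ are literally the same task: each amounts to choosing, among all factorizations of $(n-1)n$ into one odd factor and one even factor other than the trivial $(n-1)\cdot n$, the one whose two factors lie closest together, and then reading off $\tfrac12(\text{larger}-\text{smaller}-1)$. Throughout, write $d=n+k+1$ as in the definition of $A(n)$. Since $a_n(k)=\sum_{i=n}^{n+k} i=T_{n+k}-T_{n-1}$, one has $2a_n(k)=(n+k)(n+k+1)-(n-1)n=(d-1)d-(n-1)n$, and it is this identity that does all the arithmetic work.

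The first step — and, I expect, the only genuinely delicate one — is the characterization: \emph{$d$ divides $a_n(k)$ if and only if $(n-1)n$ can be written as a product $d\cdot e$ in which exactly one of $d,e$ is odd} (equivalently, since $(n-1)n$ is even, not both even). The proof is a parity computation: $d\mid a_n(k)\iff 2d\mid 2a_n(k)=(d-1)d-(n-1)n$, and modulo $2d$ one has $(d-1)d\equiv d$ when $d$ is even and $(d-1)d\equiv 0$ when $d$ is odd; so the divisibility says ``$d\mid(n-1)n$ with $(n-1)n/d$ odd'' for even $d$, and ``$d\mid(n-1)n$'' (whence $(n-1)n/d$ is automatically even, as $(n-1)n$ is even) for odd $d$ — in both cases precisely the assertion that $\{d,(n-1)n/d\}$ has one odd and one even member. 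Getting the even-$d$ case right (the passage from divisibility by $d$ to divisibility by $2d$, and the reduction $(d-1)d\equiv d\pmod{2d}$) is the main obstacle; everything after it is sorting and monotonicity.

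Next I would translate the defining conditions for $A(n)$. Requiring $k\ge1$ is the same as requiring $d\ge n+2$, and then the cofactor $e:=(n-1)n/d$ satisfies $e\le(n-1)n/(n+2)<n<d$, so $d$ is the \emph{larger} member of the one-odd-one-even factorization $\{d,e\}$. A short check — the larger member of any factorization of $(n-1)n$ is at least $\sqrt{(n-1)n}>n-1$, hence at least $n$, while the value $n+1$ cannot occur since $(n-1)n\equiv 2\pmod{n+1}$ for $n\ge 2$ — shows that the only one-odd-one-even factorization of $(n-1)n$ whose larger member is $\le n+1$ is $\{n-1,n\}$ itself. Hence $A(n)+n+1$ is the smallest larger-member among one-odd-one-even factorizations of $(n-1)n$ other than $\{n-1,n\}$, and since the product $de=(n-1)n$ is fixed, minimizing the larger member coincides with minimizing $|d-e|$. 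Calling the resulting optimal pair $\{d_0,e_0\}$ and dividing $2a_n(k)=(d-1)d-(n-1)n$ through by $2d_0$ (using $d_0\mid a_n(A(n))$) gives $q_n=\bigl((d_0-1)-e_0\bigr)/2=(d_0-e_0-1)/2$, which is exactly the value that \eqref{EqTT2a} attaches to $\{d_0,e_0\}$.

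Finally I would invoke the triangular-triple results with $n-1$ in place of $n$ (legitimate since $n-1\ge 2$ when $n\ge 3$). Theorem~\ref{ThNyb}, with $S=T_{n-1}$ so that $2S=(n-1)n$, parametrizes the representations $T_{n-1}=T_m-T_k$ by the one-odd-one-even factorizations of $(n-1)n$ via \eqref{EqTT2a}--\eqref{EqTT2b}; and Theorem~\ref{ThK} identifies $\tau(n-1)$ as the value of $k$ that \eqref{EqTT2a} produces from the odd divisor of $(n-1)n$ different from $n-1$ and $n$ — equivalently, from the one-odd-one-even factorization other than $\{n-1,n\}$ — that minimizes \eqref{Eqtau1}, i.e.\ $|d-e|$. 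That is the same factorization $\{d_0,e_0\}$ and the same formula $(d_0-e_0-1)/2$ obtained above for $q_n$, so $q_n=\tau(n-1)$.
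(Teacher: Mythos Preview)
Your proof is correct and takes a genuinely different route from the paper's. The paper proves the two inequalities $\tau(n-1)\le q_n$ and $q_n\le\tau(n-1)$ separately, each by a direct algebraic verification: for part (i) it checks that $8(T_{n-1}+T_{q_n})+1$ is a perfect square by exhibiting its square root $\alpha/d$ explicitly, and for part (ii) it solves the quadratic \eqref{EqAT1} for $k$ in terms of $q=\tau(n-1)$ and $m$, obtaining $k=q+m-n$, and then verifies that this $k$ satisfies the divisibility condition with quotient exactly $\tau(n-1)$. Your approach instead rewrites $2a_n(k)=(d-1)d-(n-1)n$ and characterizes the divisibility $d\mid a_n(k)$ as the statement that $\{d,(n-1)n/d\}$ is a one-odd-one-even factorization of $(n-1)n$; you then identify $d(n)=A(n)+n+1$ with the larger member of the one such factorization (other than $\{n-1,n\}$) minimizing the gap between its factors. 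This is precisely the optimization that Theorem~\ref{ThK} (applied at $n-1$) performs to compute $\tau(n-1)$, and the formula \eqref{EqTT2a} coincides with your expression $(d_0-e_0-1)/2$ for $q_n$. What the paper's approach buys is self-containment---it does not lean on Theorems~\ref{ThNyb} and~\ref{ThK}---at the cost of opaque symbolic manipulation (the paper notes the identities were checked in Maple). Your approach is more structural: it shows that computing $q_n$ and computing $\tau(n-1)$ are literally the same factorization problem, so Theorem~\ref{ThAT} becomes an almost immediate corollary of Theorem~\ref{ThK} rather than an independent computation; it also makes explicit the monotonicity (smallest admissible $d$ $\leftrightarrow$ smallest $|d-e|$ $\leftrightarrow$ smallest $q$) that the paper's argument uses only implicitly.
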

\begin{proof}
Note that  $R$ is a triangular number if and
only if $8R+1$ is the square of an odd integer.
Indeed, $8T_n+1 = 4n^2 + 4n + 1 = (2n+1)^2$.
The proof of the theorem  is in two parts.

\noindent (i) Given $n \ge 3$, let $k$ denote the smallest nonnegative
integer such that $d = n+k+1$ divides
$$
p ~=~ (k+1)n+\frac{k(k+1)}{2}\,.
$$
Then 
\beql{EqAT1}
q ~=~ \frac{(k+1)n+\frac{k(k+1)}{2}}{n+k+1}
\eeq
is such that $R = T_{n-1} + T_q$ is a triangular number.
Indeed,
$8R+1 = (\alpha /d)^2$, where
$$
\alpha = 2n^2 + 2kn + k^2 + n + 2k + 1 = (2n+2k+1)d-2p\,,
$$
which is certainly divisible by $d$.
(These calculations were 
performed in Maple, but they
can easily be verified by hand.)
This proves that $\TAU(n-1) \leq q_n$.
\begin{comment}
Sage code:
var('n,k')
d=n+k+1
p=(k+1)*n+k*(k+1)/2
q=p/d
R=(n-1)*n/2+q*(q+1)/2
alpha=2*n^2+2*k*n+k^2+n+2*k+1
expand(alpha-(2*n+2*k+1)*d+2*p) # 0
(8*R+1-(alpha/d)^2).simplify_rational() # 0
\end{comment}

(ii)  Conversely, suppose $n \ge 3$ and $q = \TAU(n-1)$ is such that 
\beql{EqAT2}
T_{n-1}+T_q ~=~ T_m
\eeq
for some integer $m$.
For given values of $n$ and $q$, \eqref{EqAT1} is a quadratic equation for $k$,
 and the unique solution with $k \ge 0$ is
\begin{comment}
var('n,k,q,m')
d=n+k+1
p=(k+1)*n+k*(k+1)/2
solve(q==p/d, k)
# k == -n + q + 1/2*sqrt(4*n^2 + 4*q^2 - 4*n + 4*q + 1) - 1/2
p=(k+1)*n+k*(k+1)/2
p.subs(k=q+m-n).factor()
# 1/2*(m + n + q)*(m - n + q + 1)
\end{comment}  
$$
k ~=~ -n+q-\frac{1}{2} + \frac{1}{2} \sqrt{4n^2+4q^2-4n+4q+1}\,.
$$
Using \eqref{EqAT2} we can rewrite this as
$$
k ~=~ q+m-n\,,
$$
from which we get
$$
p ~=~ (k+1)n+\frac{k(k+1)}{2} ~=~ \frac{(q+m+n)(q+m-n+1)}{2} ~=~ q \, (n+k+1).
$$
This proves that $q_n \leq \TAU(n-1)$.
\end{proof}

In row $n$ of Table~\ref{TabA}, $A(n)$ corresponds to
 to the triangular triple $[n-1, q_n, m]$, where $m$ is given in the final column.

To summarize: initially we found $A(n)$ by seeing when a certain series of
trial divisions finally succeeded. Our analysis shows that an explicit answer is 
given by first finding $q_n$ from Theorems~\ref{ThK} and \ref{ThAT},
 finding $m$ by solving the quadratic equation $T_{n-1} + T_{q_n} = T_m$,
and then $A(n) = q_n + m - n$.  For example, if $n=5$, we find that $q_5= \TAU(4) = 9$, and
$T_4+T_9 = 10+45 = 55 = T_m$ tells us that $m=10$ and $A(5) = 9+10-5 = 14$.

%%%%%%%%%%%%%%%%%%%%%%%%%%%%%%%%%%%%%
%%%%%%%%%         Section 3,  B(n)
%%%%%%%%%%%%%%%%%%%%%%%%%%%%%%%%%%%%%

\section{The multiplicative version, $\{B(n)\}$.}\label{SecB}
For the multiplicative version we replace the addition
of $n+i$ in the definition of $a_n(i)$ by multiplication, keeping the stopping rule.
So we define $B(n)$ for $n \ge 1$ by introducing an intermediate sequence $\{b_n(i), i \ge 0\}$ which 
starts with $b_n(0)=n$, and, for $i \ge 1$, satisfies $b_n(i)=b_n(i-1)\cdot (n+i)$. 
We stop when we  reach a term $b_n(k)$ which is divisible by $d=n+k+1$, and set $B(n)=k$.  
In other words, if the number $d$ that we are about to multiply $b_n(k)$ by actually divides $b_n(k)$, 
then instead of multiplying by it we stop.

An equivalent definition is that $B(n)$ is the smallest positive integer $k = k(n)$ such that $d(n) = n+k+1$ divides
\beql{EqB1}
b_n(k)  ~=~ \frac{(n+k)!}{(n-1)!}.
\eeq

When $n=1$, for example, the sequence $\{b_1(i)\}$ is $1,2,6,24,120$, and
we stop with $k=4=B(1)$ since $120$ is divisible by $d=1+4+1=6$.
For $n=4$, the sequence $\{b_4(i)\}$ is $4,20,120,840$, and we stop with $k=3=B(4)$ since $840$ is divisible by $d=4+3+1=8$.

Table \ref{TabB} gives the values of $B(n)=k(n)$, $d(n)=n+k(n)+1$, $p(n) = b_n(k)$, and
$q_n=p(n)/d(n)$ for
$n=1,2,\ldots,12$.

\begin{table}[htb]
\caption{}\label{TabB}
$$
\begin{array}{rcrrr}
n & B(n)=k(n) & d(n) & p(n)=b_n(k) & q_n=p(n)/d(n)  \\
\hline
1  & 4          & 6 & 120       & 20   \\   
2  & 3          & 6 & 120       & 20   \\  
3  & 2          & 6 & 60        & 10   \\   
4  & 3          & 8 & 840       & 105  \\    
5  & 4          & 10 & 15120    & 1512 \\     
6  & 5          & 12 & 332640   & 27720 \\     
7  & 4          & 12 & 55440    & 46 20 \\     
8  & 3          & 12 & 7920     & 660  \\    
9  & 5          & 15 & 2162160  & 144144  \\    
10 & 4          & 15 & 240240   & 16016   \\   
11 & 6          & 18 & 98017920 & 5445440 \\     
12 & 5          & 18 & 8910720  & 495040        
\end{array}
$$
\end{table}

The sequences $\{B(n)\}$, $\{d(n)\}$, $\{p(n)\}$, $\{q_n\}$ have now been added to \cite{OEIS}: $\{B(n)\}$
is A332558.  Just as in the additive version, 
there is a close match with an existing sequence in \cite{OEIS}.
If we add $1$ to the values of $k(n)$ we get $5,4,3,4,5,6,\ldots$,
which appears to match the entry for A061836,
although the definitions are different.
The older sequence, which we will denote by $\{\beta(n)\}$,  
has a more natural definition: $\beta(n)$ for $n \ge 0$ 
is defined to be the smallest integer $\kappa > 0$ such that $n+\kappa$ divides $\kappa!$.

\begin{thm}\label{ThB1}
For $n \ge 1$, $\beta(n) = B(n)+1$.
\end{thm}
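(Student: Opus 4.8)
The plan is to route both $B(n)$ and $\beta(n)$ through a single elementary congruence and then read off the equality. The crucial observation is that for all $n\ge 1$ and all $m\ge 1$,
$$
n(n+1)\cdots(n+m-1) ~\equiv~ (-1)^m\, m! \pmod{n+m}\,.
$$
First I would prove this: for $0\le j\le m-1$ we have $n+j=(n+m)-(m-j)\equiv -(m-j)\pmod{n+m}$, and as $j$ ranges over $0,1,\dots,m-1$ the numbers $m-j$ range over $m,m-1,\dots,1$; multiplying the $m$ congruences gives the identity. Since $(-1)^m$ is a unit modulo $n+m$, this yields the equivalence
$$
(n+m)\mid n(n+1)\cdots(n+m-1) \quad\Longleftrightarrow\quad (n+m)\mid m!\,.
$$

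Next I would rewrite the two definitions in this language. By \eqref{EqB1}, $B(n)$ is the smallest $k\ge 1$ such that $d=n+k+1$ divides $b_n(k)=n(n+1)\cdots(n+k)$. Substituting $m=k+1$ (so $m\ge 2$, $n+m=d$, and $b_n(k)=n(n+1)\cdots(n+m-1)$), the defining condition becomes: $m\ge 2$ is the smallest integer with $(n+m)\mid n(n+1)\cdots(n+m-1)$, equivalently — by the displayed equivalence — the smallest $m\ge 2$ with $(n+m)\mid m!$. Hence $B(n)+1$ equals the smallest integer $m\ge 2$ for which $(n+m)\mid m!$.

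Finally, $\beta(n)$ is by definition the smallest integer $\kappa\ge 1$ with $(n+\kappa)\mid\kappa!$. For $n\ge 1$ the value $\kappa=1$ is excluded, since $n+1\ge 2$ does not divide $1!=1$; so minimizing over $\kappa\ge 1$ is the same as minimizing over $\kappa\ge 2$, and therefore $\beta(n)$ is the smallest $m\ge 2$ with $(n+m)\mid m!$, that is, $\beta(n)=B(n)+1$. (As a byproduct this also re-derives the existence of $B(n)$ from that of $\beta(n)$, since it shows $\beta(n)\ge 2$.) There is no real obstacle here beyond spotting the congruence; once it is in hand the rest is bookkeeping, and the one point that needs care is precisely the $\kappa=1$ base case, which is what produces the ``$+1$'' shift.
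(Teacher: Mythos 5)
Your proof is correct. The paper's proof splits the equality into two inequalities: for $B(n)\le\beta(n)-1$ it notes that the quotient $n(n+1)\cdots(n+k)/(k+1)!=\binom{n+k}{k+1}$ is an integer, so any $k$ with $n+k+1\mid(k+1)!$ also has $n+k+1$ dividing the rising product; for the reverse inequality it uses exactly your shift trick, replacing each factor $n+j$ by $n+j-(n+k+1)$ to turn the product into $(-1)^{k+1}(k+1)!$ modulo $n+k+1$. Your observation is that this second argument is already a two-way congruence: since $(-1)^m$ is a unit modulo $n+m$, divisibility of $n(n+1)\cdots(n+m-1)$ by $n+m$ is \emph{equivalent} to $(n+m)\mid m!$, so the binomial-coefficient direction becomes unnecessary and both inequalities drop out of one identity. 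That makes your version slightly leaner, and it also handles explicitly the small boundary point the paper passes over silently, namely that $\kappa=1$ can never satisfy $(n+1)\mid 1!$ for $n\ge1$, which is what legitimizes restricting the minimization to $\kappa=k+1$ with $k\ge1$ and produces the ``$+1$'' shift.
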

\begin{proof}
By definition, $B(n)$ is the smallest $k>0$ such that
\beql{EqB2}
n+k+1 ~|~ n(n+1)(n+2)\cdots (n+k)\,,
\eeq
whereas $\beta(n)$ is the smallest $\kappa > 0$ such that
$$
n+\kappa ~|~ 1 \cdot  2 \cdot 3 \cdots \kappa \,,
$$
or, replacing $\kappa$ by $k+1$, the smallest $k$ such that
\beql{EqB3}
n+k+1 ~|~ 1 \cdot  2 \cdot 3 \cdots (k+1).
\eeq
The ratio of the right-hand sides of \eqref{EqB2} and \eqref{EqB3}
equals 
$\binom{n+k}{k+1}$
which is an integer, so
the right-hand side of \eqref{EqB3}
divides the right-hand side of \eqref{EqB2}. 
So the value of $k$ defined by \eqref{EqB2} is less than or equal to the value defined by \eqref{EqB3}.
To complete the proof, it is enough to show that if $n+k+1$ divides $n(n+1)(n+2)\cdots (n+k)$
then it divides $(k+1)!$.
But $n+k+1$ also divides $(\sigma + n)(\sigma+n+1)(\sigma+n+2)\cdots (\sigma+n+k)$
for any $\sigma$ that is a multiple of $n+k+1$. Taking $\sigma=-(n+k+1)$,
that expression becomes $(-1)^{k+1} (k+1)!$.
\end{proof}
\begin{comment}
This is true because, for any prime $p$, 
$$
\nu_p \left(\frac{(n+k)!}{(n-1)!} \right) ~\ge~ \nu_p( (k+1)!)\,,
$$
which in turn follows from
$$
\nu_p \left(\binom{n+k}{n-1} \right) ~\ge~ 0\,.
$$
\end{comment}

We do not know of any simple formula for $B(n)$ in terms of $n$. The following is a weak upper bound,
which at least shows that $B(n)$ always exists.

\begin{thm}\label{ThB2}
For $n \ge 3$, $B(n) \le n-1$.
\end{thm}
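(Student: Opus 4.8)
The plan is to show that for $n \ge 3$ the stopping condition is already met by some $k \le n-1$, i.e.\ that there exists $k$ with $1 \le k \le n-1$ such that $d = n+k+1$ divides $b_n(k) = n(n+1)\cdots(n+k)$. Equivalently, using Theorem~\ref{ThB1}, it suffices to exhibit $\kappa$ with $2 \le \kappa \le n$ such that $n+\kappa$ divides $\kappa!$. The natural candidate is to take $\kappa = n$, so that $d = 2n$, and ask whether $2n \mid n!$. This works as soon as $n!$ contains a factor of $2$ beyond the obvious factor $n$; concretely, $n! = n \cdot (n-1)!$, and $(n-1)!$ is even whenever $n-1 \ge 2$, i.e.\ $n \ge 3$. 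Hence $2n \mid n!$ for all $n \ge 3$, which gives $\beta(n) \le n$, and therefore $B(n) = \beta(n) - 1 \le n-1$.

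So the argument I would carry out is: first invoke Theorem~\ref{ThB1} to reduce the claim $B(n) \le n-1$ to the claim $\beta(n) \le n$; then recall that $\beta(n)$ is the least $\kappa > 0$ with $n + \kappa \mid \kappa!$; then verify that $\kappa = n$ is admissible by observing $n + n = 2n$ and $n! = n \cdot (n-1)!$, with $(n-1)!$ divisible by $2$ for $n \ge 3$ (indeed by larger powers of $2$, but $2$ is all we need), so $2n \mid n!$. Since $\kappa = n$ satisfies the defining condition, $\beta(n) \le n$, completing the proof.

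Alternatively — and perhaps more directly, avoiding Theorem~\ref{ThB1} — one can argue straight from the definition of $B(n)$: take $k = n-1$, so $d = n+k+1 = 2n$, and note $b_n(n-1) = n(n+1)\cdots(2n-1) = \binom{2n-1}{n-1}\,(n-1)!\,$, or more simply $b_n(n-1) = \frac{(2n-1)!}{(n-1)!}$. This product contains both the factor $n$ and an even factor among $n+1,\ldots,2n-1$ (there is one as soon as that range is nonempty, i.e.\ $n \ge 2$, and we get a second even factor ensuring divisibility by $2n$ once $n \ge 3$); hence $2n \mid b_n(n-1)$, so the stopping rule fires at or before $k = n-1$, giving $B(n) \le n-1$. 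I would present whichever of these is cleaner in context; since Theorem~\ref{ThB1} has just been proved, routing through $\beta(n) \le n$ is the tidiest.

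There is essentially no main obstacle here: the only thing to be slightly careful about is the parity bookkeeping (making sure $n \ge 3$ really does guarantee the extra factor of $2$, and that no smaller constraint is needed), and the fact that the bound is deliberately weak — equality $B(n) = n-1$ will rarely hold, as Table~\ref{TabB} shows, so one should not expect or attempt to prove tightness here.
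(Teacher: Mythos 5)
Your proposal is correct and in essence the same as the paper's proof: the paper also takes $k=n-1$, so that $d=2n$, and observes that $b_n(n-1)=(2n-1)!/(n-1)!$ is divisible by $2n$ for $n\ge 3$; your main variant via Theorem~\ref{ThB1} verifies the identical fact in the equivalent form $2n \mid n!$ using the same parity observation. One cosmetic slip in your alternative: $n(n+1)\cdots(2n-1)=\binom{2n-1}{n-1}\,n!$, not $\binom{2n-1}{n-1}\,(n-1)!$, but your argument never actually uses that identity, so nothing is affected.
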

\begin{proof}
Substituting $k=n-1$ in \eqref{EqB1}, we get $b_n(n-1) = (2n-1)!/(n-1)!$,
which is  divisible by $n+k+1=2n$ for $n \ge 3$.
\end{proof}

\begin{comment}
read("/Users/njasloane/bin/format"); 
f:= proc(n) local k, p;
  p:= n;
  for k from 1 do
    p:= p*(n+k);
    if (p/(n+k+1))::integer then return k fi
  od
end proc:
s1:=map(f, [$1..90000]):
s2:=RECORDS(s1);
\end{comment}

\begin{comment} % old stuff about the growth of B(n)
The growth of $B(n)$ is slow and irregular. After $10^4$ terms the maximal value is $B(7192)=52$,
after $10^5$ terms  it is $B(87568) = 116$,
and after $10^6$ terms  it is $B(879410) = 181$.
The data suggests that perhaps
%$B(n)=O(\log(n)^2)$.
$B(n) = O(n^\delta)$ where $\delta$ is slightly more than $\frac{1}{3}$. It would be nice to know more.
\end{comment}

%%%%%%%%%%%%%%%%%%%%%%%%%%%%%%%%%%%%%%%%%%%%
%   #.1 Growth of B(n)
%%%%%%%%%%%%%%%%%%%%%%%%%%%%%%%%%%%%%%%%%%%%

\subsection{Asymptotic growth of $B(n)$.}
We conjecture that as $n$ goes to infinity,
\begin{equation} \label{eq_k}
  B(n) = \exp \left( (c + o(1)) (\log n)^{1/2} (\log\log n)^{1/2} \right),
\end{equation}
with $c = 1/ \sqrt{2} =  0.7071\ldots$
In the rest of this section
we sketch some arguments that
 support the conjecture.\footnote{A proof of Equation \eqref{eq_k} might be possible using the techniques of~\cite{LPP93}.} 

Since $B(n) = \beta(n) - 1$ from Theorem~\ref{ThB1}, we study the asymptotic
growth of $\beta(n)$ instead.
Let $\beta'(n)$ be the smallest integer $k \geq 1$ such that
$n+k$ is $k$-smooth (i.e., it has only factors less than or equal to $k$).
Since $k!$ is $k$-smooth, clearly $\beta'(n) \leq \beta(n)$.
The converse is not always true: $\beta(2) = 4$ but $\beta'(2) = 2$
since $2 + 2$ is $2$-smooth.
However for large $n$ this phenomenon becomes increasingly rare.
For $10^8 \leq n < 2 \cdot 10^8$, only  5.7\% of the values of $n$ are such that
 $\beta'(n) < \beta(n)$, % 5695820/10^8
and for $10^9 \leq n < 2 \cdot 10^9$ the proportion
decreases to 4.2\%. % 42426150/10^9
Our first unproved assumption is that $\beta(n)$ and $\beta'(n)$ have the
same asymptotic behavior, so that it suffices to
study the asymptotic behavior of $\beta'(n)$.
% B = A332558, beta = A061836, beta' = A217287, where beta'<beta = A333545

The number $\Psi(n,k)$ of $k$-smooth numbers  $\le n$ is given by
Dickman's $\rho$ function:
\[ \frac{\Psi(n,k)}{n} \approx \rho(u), \]
where $u = \log n/\log k$ \cite{Dic30, Gra08}.
As $u$ goes to infinity, we have \cite[Eq. (1.6)]{Gra08}:
\beql{EqG1}
\rho(u) = u^{-u + o(u)}\,.
\eeq
However, what we want is the \emph{local density} $\Psi'(n,k)$
around $n$. This is studied
in Kruppa's Ph.~D.~thesis \cite[formula (5.6)]{Kruppa}, where it is shown that
\beql{EqG2}
 \frac{\Psi'(n,k)}{n} ~\approx~ \rho(u) - \gamma \, \frac{\rho(u-1)}{\log n}\,,
 \eeq
 $\gamma$ being the Euler-Mascheroni constant.
In our case the local density is close to the global density.
For example for $n=10^{25}+2554$ we have $\beta'(n) = 29972$,
thus $u \approx 5.584$, which yields
$\rho(u) \approx 6.7 \cdot 10^{-5}$, and
$\gamma \rho(u-1)/\log n \approx 1.1 \cdot 10^{-5}$.
Our second unproved assumption is that the local density is
$\approx \rho(u)$ asymptotically.
The expected distance between two $k$-smooth numbers around $n$
being $\approx 1/\rho(u)$,
the expected distance between
a random $n$ and the next $k$-smooth number is thus $\approx 1/(2\rho(u))$.
\begin{comment}
sage: n=10^25+2554
sage: k=29972
sage: u=float(log(n)/log(k))
sage: u
5.584451089084364
sage: dickman_rho(u)
6.72995152092452e-05
sage: float(euler_gamma*dickman_rho(u-1)/log(n))
1.098828789682139e-05
\end{comment}

The above arguments, together with \eqref{EqG1}, combine to suggest that $B(n)$ is approximately
equal to the solution $k$ of the equations
\beql{EqG5}
k ~\approx~ \frac{u^u}{2}, \quad u ~=~ \frac{\log n}{\log k}\,.
\eeq
We have
\begin{align}%\label{EqG33}
\log k & ~\approx~ u \log u ~\approx~ \frac{\log n}{\log k}\,(\log\log n -\log\log k) \,, \nonumber \\
(\log k)^2 & ~\approx~ \log n \,(\log\log n - \log\log k)\,, \nonumber \\
2 \log\log k & ~\approx~ \ \log\log n\,, \nonumber
\end{align}
and so
$$
(\log k)^2 ~\approx~ \frac{1}{2} \log n \log\log n\,,
$$
which gives \eqref{eq_k}.

\begin{figure}[!ht]
        \centerline{\includegraphics[ width=3in]{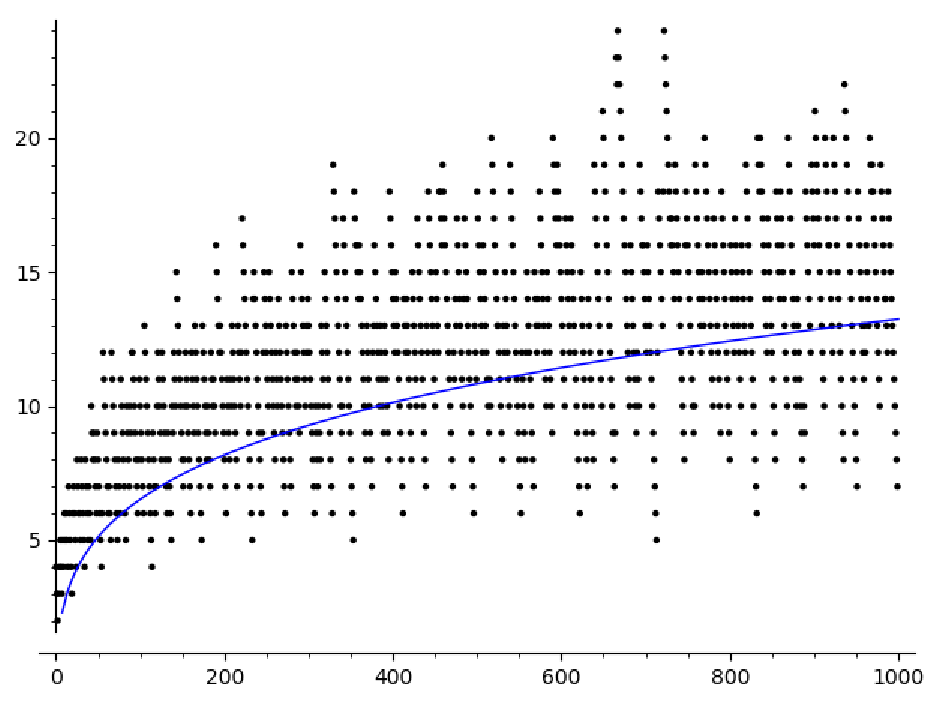}}
        % \centerline{\includegraphics[ width=3in]{Bn.png}}
        \caption{The first $1000$ terms of $B(n)$ and (the curved line) $\overline{B(n)}$. }
        \label{Fig1}
        \end{figure}

Let 
\beql{EqGBar}
\overline{B(n)} ~:=~   \exp \left( \frac{1}{\sqrt{2}}\, (\log n)^{1/2} (\log\log n)^{1/2} \right)
\eeq
denote the main term on the right-hand side of \eqref{eq_k}.
$\overline{B(n)}$ is a reasonably good fit to $B(n)$, even for small $n$.
The graph in Fig.~\ref{Fig1} shows the first $1000$ terms of $B(n)$ and
(the curved line) $\overline{B(n)}$. 
We find that  $\overline{B(n)}$ is still a reasonably good fit to $B(n)$, even 
out to $n = 10^{30}$. Furthermore, it appears  that  
$\varlimsup_{n \to \infty}  B(n)$ is also  given by the right-hand side of \eqref{eq_k}.
It would be nice to know more about the asymptotic behavior of $B(n)$.

%%%%%%%%%%%%%%%%%%%%%%%%%%%%%%%%%%%%%
%%%%%%%%%         Section 4,  C(n)
%%%%%%%%%%%%%%%%%%%%%%%%%%%%%%%%%%%%%

\section{The concatenation version, $\{C(n)\}$.}\label{SecC}

For the third version we replace addition and multiplication by concatenation,  but again keep the same stopping rule.
We define $C(n)$ for $n \ge 1$ by introducing an intermediate sequence $\{c_n(i), i \ge 0\}$ which
starts with $c_n(0)=n$, and, for $i \ge 1$, satisfies $c_n(i)=c_n(i-1) \| (n+i)$,
where $r \| s$ denotes the number whose decimal expansion is the concatenation of 
the decimal expansions of $r$ and $s$.
We stop if and when we  reach a term $c_n(k)$ which is divisible by $d=n+k+1$, and set $C(n)=k$.
In other words, if the number $d$ that we are about to concatenate to $c_n(k)$  actually divides $c_n(k)$,
then instead of concatenating $d$ we stop.
In contrast to the first two versions, here we do not have a proof that such a $k$ always exists.
It is theoretically possible that the sequence $c_n(i)$ never stops, in which case we define $C(n)$ to be $-1$.

 \begin{table}[htb]
\caption{}\label{TabC}
$$
\begin{array}{|rr|rr|rr|rr|}
\hline
 n & C(n) & n & C(n) & n & C(n) & n & C(n) \\
 \hline
1 & 1 & 26 & 33172 & 51 & 2249 & 76 & 320 \\
2 & 80 & 27 & 9 & 52 & 21326 & 77 & 59 \\
3 & 1885 & 28 & 14 & 53 & 53 & 78 & 248 \\
4 & 6838 & 29 & 317 & 54 & 98 & 79 & 31511 \\
5 & 1 & 30 & 708 & 55 & 43 & 80 & 20 \\
6 & 44 & 31 & 1501 & 56 & 20 & 81 & 5 \\
7 & 13 & 32 & 214 & 57 & 71 & 82 & 220 \\
8 & 2 & 33 & 37 & 58 & 218 & 83 & 49 \\
9 & 1311 & 34 & 34 & 59 & 91 & 84 & 12 \\
10 & 18 & 35 & 67 & 60 & 1282 & 85 & 25 \\
11 & 197 & 36 & 270 & 61 & 277 & 86 & 22 \\
12 & 20 & 37 & 19 & 62 & 56 & 87 & 105 \\
13 & 53 & 38 & 20188 & 63 & 47 & 88 & 34 \\
14 & 134 & 39 & 78277 & 64 & 106 & 89 & 4151 \\
15 & 993 & 40 & 10738 & 65 & 1 & 90 & 1648 \\
16 & 44 & 41 & 287 & 66 & 890 & 91 & 2221 \\
17 & 175 & 42 & 2390 & 67 & 75 & 92 & 218128159460 \\
18 & 124518 & 43 & 695 & 68 & 280 & 93 & 13 \\
19 & 263 & 44 & 2783191412912 & 69 & 19619 & 94 & 376 \\
20 & 26 & 45 & 3 & 70 & 148 & 95 & 23965 \\
21 & 107 & 46 & 700 & 71 & 15077 & 96 & 234 \\  
22 & 10 & 47 & 8303 & 72 & 64 & 97 & 321 \\
23 & 5 & 48 & 350 & 73 & 313 & 98 & 259110640 \\
24 & 62 & 49 & 21 & 74 & 34 & 99 & 109 \\
25 & 15 & 50 & 100 & 75 & 557 & 100 & 346 \\
\hline
\end{array}
$$
\end{table}

When $n=1$, for example, the sequence $\{c_1(i)\}$ is $c_1(0)=1$, $c_1(1)=1 \| 2 = 12$, and we stop with 
$k = 1 = C(1)$ since $12$ is divisible by $d=1+1+1=3$. 
Note that we cannot have $C(n) = 0$ since it would imply that $n+1$ divides
$n$.

For $n=7$, the sequence $\{c_7(i)\}$ is 
$$
7, 78, 789, 78910, 7891011, \ldots, 7891011121314151617181920\,,
$$
and after concatenating $20$ we stop with $k = 13 = C(7)$, since the last number there, which is $c_7(13)$,  is a multiple of $21$.

For $n=2$ the sequence $\{c_2(i)\}$ is 
$2,23,234, \ldots$ and stops with $k=80=C(2)$  at the $154$-digit number
$$
c_2(80) ~=~ 234567891011121314151617181920\ldots6970717273747576777879808182\,,
$$
which is divisible by $83$. 

Although a purist may be unhappy because its definition involves 
base $10$ arithmetic,\footnote{There is also a base-$2$ version,  with similar properties, although we will not
discuss it here: see A332563.} we find
$\{C(n)\}$ more interesting than $\{A(n)\}$ and $\{B(n)\}$ because 
its behavior is so erratic for such a simple rule, and 
we have no theoretical explanation for  this mixture of very small and very large numbers.

Table \ref{TabC} gives the values of $C(n)$ for $n \le 100$.
% PZ: the value for $n=44$ was double-checked by the sieving algorithm
% and a parallel implementation of Joseph's program
The values up to about $2 \cdot 10^5$ were found by straightforward direct search, 
but for the larger values we used the sieving algorithm described  
in the rest of this section.
At the present time we have found the exact value of $C(n)$ for all $n \le 1000$
except for two cases: $n=539$, where we only have upper and lower bounds, and $n=158$,
where we have searched up to $10^{14}$ without success,
and it is possible that $c_{158}(i)$ does not terminate.
The entry for $C(n)$ in \cite{OEIS}, A332580, includes a table  
for $n \le 1000$.
Although we do not have a proof that the sequence $\{c_n(i)\}$ always
terminates, the following heuristic argument suggests that it should.
After $k$ steps, we test $c_n(k)$ for divisibility by
$d=n+k+1$. There are three obvious cases when the division
is impossible: (i) when $n+k+1$ is even, since $c_n(k) \equiv n+k \pmod{10}$ is
odd and cannot be divisible by an even number;
(ii) when $n+k+1$ is a multiple of $5$, since then  $c_n(k) \equiv 4 \mbox{~or~} 9  \pmod{10}$
cannot be divisible by~$5$;
(iii) when $n+k+1$ is a multiple of $3$
in the case when $n \equiv 2 \pmod{3}$, since  then $n+k \equiv  2 \pmod{3}$
and so
$$c_n(k) \equiv 2 + 0 + 1 + 2 + \cdots + 0 + 1 + 2 \equiv 2 \pmod{3}.$$
%$$c_n(k) \equiv 2 + (0+1+2) + (0+1+2) + \ldots + (0+1+2) \equiv 2 \pmod{3}.$$
Apart from this, $c_n(k)$ is essentially
a very large random number.\footnote{If $k$ has $j$ digits, $c_n(k)$
has about $jk$ digits, and we routinely search for $k$ up to $10^{11}$.}
The chance that $c_n(k)$ is divisible by $d$ is roughly $1/d$, and since
for a fixed $n$ the sum  $\sum_{k=1}^{\infty} 1/(n+k+1)$ diverges, we expect one of the divisions to succeed.
However,  we must admit that even when  we  try to make this argument more precise  by taking into account
conditions (i), (ii), (iii), the results do not fully explain the extreme irregularities in the   
values of $C(n)$ that can be seen in Table~\ref{TabC}. This sequence is still very mysterious.

%%%%%%%%%%%%%%%%%%%%%%%%%%%%%%%%%%%%%%%%%%%%%%%%%
% Section 4.1  The algorithms used to find $C(n)$.
%%%%%%%%%%%%%%%%%%%%%%%%%%%%%%%%%%%%%%%%%%%%%%%%%

\subsection{The concatenated words ${}_nW_m$.}\label{Sec4.1} 
%\subsection{A sieving algorithm for $C(n)$.}\label{Sec4.1} %$C(n)$.}

We now present the  sieving algorithm  for $C(n)$ which we used
to obtain
\begin{align}\label{EqBigCs}
& C(44) = 2783191412912,~
C(92) = 218128159460, ~% equality confirmed by Joseph
C(494) = 2314160375788, \nonumber \\
& 10^{14} < C(539) \leq 887969738466613,~
C(761) = 615431116799,  \nonumber \\ % equality confirmed by Joseph
& C(854) = 440578095296,   \mbox{and~} C(944) = 1032422879252.  % \nonumber
\end{align}

The numbers involved in this search are quite large\footnote{We
are tempted to say mind-boggling.}. To find $C(539)$, for example,
we must test numbers with about $10^{16}$ digits, that is,
numbers on the order of $10^{10^{16}}$, so see if they are divisible by numbers 
like $887969738466613$. Our algorithm (see~\S\ref{Sec4.3}) is therefore
fairly complicated, and requires some considerable technical machinery, which is developed in this section.

%\section{The word $12345...$ and alike}

Although we only use it here for base $10$ calculations, we present the algorithm in
terms of an arbitrary base $b \ge 2$.
For an  integer $m \ge 0$, let $[m]_b$ denote its $b$ representation (formed by digits from $\{0,1,\dots,b-1\}$) starting with the most significant digit.

For any positive integer $m$, let $W_m$ be the integer
whose base-$b$ representation is the  concatenation of the base-$b$ representations of integers $1,\ 2,\ \dots,\ m$, that is
$$
[W_m]_b = [1]_b\,\|\,[2]_b\,\|\,\cdots\,\|\,[m]_b.
$$
We set $W_0:=0$.

Similarly, for integers $m\geq n\geq 1$, we define an integer ${}_nW_m$ by
$$
[{}_nW_m]_b = [n]_b\,\|\,[n+1]_b\,\|\,\cdots\,\|\,[m]_b,
$$
so that $W_m = {}_1W_m$. 

It can be seen that (extending the definition of  $c_n(i)$ to  base $b$), we have $c_n(i) = {}_nW_{n+i}$.
Correspondingly, the value of $C(n)$ (in base $b$) is given by $m-n-1$, 
where $m$ is the smallest positive integer such that $m>n$ and
\beql{eq:congm}
{}_nW_{m-1}\equiv 0\pmod{m}.
\eeq

For a positive integer $w$ we denote by $|w|$ the length of $[w]_b$. Clearly
$$
{}_nW_m = W_m - W_{n-1}\cdot b^{|{}_nW_m|}.
$$

%%%%%%%%%%%%%%%%%%%%%%%%%%
%%% Lemma 1
%%%%%%%%%%%%%%%%%%%%%%%%%%

In the following, we assume that $m$ is an $\ell$-digit integer,
with $\ell \geq 1$.
\begin{lemma}\label{lem:wlen}
Suppose $m$ is an $\ell$-digit integer in base $b$ (i.e.,
  $b^{\ell-1}\leq m< b^{\ell}$) and $1\leq n < m$.
Then 
$$|W_m| = \ell\cdot (m+1) - \frac{b^\ell-1}{b-1}$$
and
$$|{}_nW_m| = |W_m| - |W_{n-1}|.$$
In particular,
$$|W_{b^\ell-1}| = \ell\cdot b^\ell - \frac{b^\ell-1}{b-1}$$
and for any nonnegative integer $k<\ell$,
\begin{equation} \label{eq:lemma41_4}
|{}_{b^{k}}W_m| = \ell\cdot (m+1) - \left(\frac{b^{\ell-k}-1}{b-1} + k\right)\cdot b^k.
\end{equation}
\end{lemma}

\begin{proof}
It is easy to see that $[W_m]_b$ is formed by the concatenation of $b^k - b^{k-1}$ $k$-digit numbers for each $k=1,2,\dots,\ell-1$, and $m-b^{\ell-1}+1$ $\ell$-digits numbers.
Hence,
$$|W_m| = \sum_{k=1}^{\ell-1} k\cdot (b^k-b^{k-1}) + \ell\cdot (m-b^{\ell-1}+1) = \ell\cdot (m+1) - \frac{b^\ell-1}{b-1}.$$
Since $[{}_nW_m]_b$ is obtained from $[W_m]_b$ by removing the prefix $[W_{n-1}]_b$, we have
$$|{}_nW_m| = |W_m| - |W_{n-1}|.$$
\end{proof}

%%%%%%%%%%%%%%%%%%%%%%%%%%
%%% Lemma 2
%%%%%%%%%%%%%%%%%%%%%%%%%%

\begin{lemma}\label{lem:wround}
Suppose that $b^{\ell-1}\leq m< b^{\ell}$. Then
$$
{}_{b^{\ell-1}}W_m = \frac{m - (m + 1)b^\ell + (b^{2\ell-1} - b^{\ell-1} + 1) b^{\ell(m+1-b^{\ell-1})}}{(b^\ell-1)^2}.
$$
In particular, for $m=b^\ell-1$ we have
$$
{}_{b^{\ell-1}}W_{b^\ell-1} = \frac{b^\ell-1 - b^{2\ell} + (b^{2\ell-1} - b^{\ell-1} + 1) b^{\ell(b^\ell-b^{\ell-1})}}{(b^\ell-1)^2}.
$$
\end{lemma}

\begin{proof}
Notice that 
$$[{}_{b^{\ell-1}}W_m]_b = [b^{\ell-1}]_b\,\|\,[b^{\ell-1}+1]_b\,\|\,\dots\,\|\,[m]_b,$$
where each term in the right hand side is composed of $\ell$ digits. It follows that
%\[
%\begin{split}
%{}_{b^{\ell-1}}W_m &= \sum_{i=0}^{m-b^{\ell-1}} (m-i)\cdot b^{\ell\cdot i} \\
%&= \frac{1}{(b^\ell-1)^2} ( m - (m + 1)b^\ell + (b^{2\ell-1} - b^{\ell-1} + 1) b^{\ell(m+1-b^{\ell-1})}  ).
%\end{split}
%\]
$$
{}_{b^{\ell-1}}W_m = \sum_{i=0}^{m-b^{\ell-1}} (m-i)\cdot b^{\ell\cdot i} = \frac{m - (m + 1)b^\ell + (b^{2\ell-1} - b^{\ell-1} + 1) b^{\ell(m+1-b^{\ell-1})}}{(b^\ell-1)^2}.
$$
\end{proof}

%%%%%%%%%%%%%%%%%%%%%%%%%%
%%% Lemma 3
%%%%%%%%%%%%%%%%%%%%%%%%%%

\begin{lemma}\label{lem:wgen}
Suppose that $b^{\ell-1}\leq m< b^{\ell}$ and $1\leq n\leq m$. Then
$$
W_m = {}_{b^{\ell-1}}W_m + \sum_{k=1}^{\ell-1} {}_{b^{k-1}}W_{b^k-1}\cdot b^{ |{}_{b^k}W_m| } 
$$
and
$$
{}_nW_m = {}_{b^{\ell-1}}W_m - W_{n-1}\cdot b^{|{}_nW_m|} + \sum_{k=1}^{\ell-1} {}_{b^{k-1}}W_{b^k-1}\cdot b^{ |{}_{b^k}W_m| }.
$$
\end{lemma}

\begin{proof}
We notice that
$$[W_m]_b = [{}_{b^0}W_{b-1}]_b \,\|\, [{}_{b}W_{b^2-1}]_b \,\|\, \dots \,\|\, [{}_{b^{\ell-2}}W_{b^{\ell-1}-1}] \,\|\, [{}_{b^{\ell-1}}W_m]_b,$$
implying that
$$W_m = {}_{b^{\ell-1}}W_m + \sum_{k=1}^{\ell-1} {}_{b^{k-1}}W_{b^k-1}\cdot b^{|{}_{b^k}W_m|}.$$
By substituting   this expression into ${}_nW_m = W_m - W_{n-1}\cdot b^{|{}_nW_m|}$ we obtain the formula for ${}_nW_m$.
\end{proof}

%%%%%%%%%%%%%%%%%%%%%%%%%%%%%%%%%%%%%%%%%%%%%%%%%
% Section 4.2  
%%%%%%%%%%%%%%%%%%%%%%%%%%%%%%%%%%%%%%%%%%%%%%%%%

\subsection{Prime powers dividing solutions to the congruence ${}_nW_{m-1}\equiv 0\pmod{m}$}\label{Sec4.2}

To find $C(n)$ we must find the smallest positive integer $m > n$ satisfying the congruence \eqref{eq:congm}.
%We note that $C(n)=m-n-1$, where $m$ is the smallest positive integer satisfying 
%\begin{equation}\label{eq:congm}
%{}_nW_{m-1}\equiv 0\pmod{m}.
%\end{equation}
As we will see in Section \ref{Sec4.3}, we  build $m$ from the set of prime powers dividing $m$. 
In this section we will show how to identify these prime powers.

Suppose that a solution $m$ to the congruence \eqref{eq:congm} is divisible by
a prime power $p^d$ for some integer $d \geq 1$.
% Let $p$ be a prime such that $p^d \mid m$ for some integer $d\geq 1$.
Then $m$ is a solution to the pair of congruences:
\begin{equation}\label{eq:congq}
\begin{cases}
_nW_{m-1}\equiv 0\pmod{p^d}, \\
m\equiv 0\pmod{p^d}.
\end{cases}
\end{equation}
Our algorithm relies on the ability to identify the $\ell$-digit solutions $m$ to this system for any given $\ell$ and prime power $p^d$.
We will use  the following expression for $_nW_{m-1}$.

\begin{theorem}\label{th:abc} Suppose that $b^{\ell-1}\leq m-1 < b^{\ell}$ and $1\leq n < m$.
Then 
\begin{equation}\label{eq:wexp}
{}_nW_{m-1} = \frac{a_{n,\ell}\cdot b^{\ell (m-b^\ell)} - (b^\ell-1)m - 1}{(b^\ell-1)^2},
\end{equation}
where 
\[
\begin{split}
a_{n,\ell} & := 
(b^{2\ell-1} - b^{\ell-1} + 1)\cdot b^{\ell(b^\ell-b^{\ell  -1})} \\
& - {(b^\ell-1)^2} W_{n-1}\cdot b^{\ell b^{\ell} - \frac{b^\ell-1}{b-1} - |W_{n-1}|  } \\
& + {(b^\ell-1)^2} \sum_{k=1}^{\ell-1} {}_{b^{k-1}}W_{b^k-1}\cdot b^{\ell b^{\ell} - \left(\frac{b^{\ell-k}-1}{b-1} + k\right)\cdot b^k }
\end{split}
\]
is an integer that depends only on $n$ and $\ell$ but not on $m$. 
\end{theorem}

\begin{proof}
The  formulas for ${}_nW_{m-1}$ and $a_{n,\ell}$ follow from Lemmas~\ref{lem:wlen}-\ref{lem:wgen}. 
Noticing that $\ell b^{\ell} - \frac{b^\ell-1}{b-1} - |W_{n-1}| = |{}_nW_{b^{\ell}-1}|$ and 
$\ell b^{\ell} - \left(\frac{b^{\ell-k}-1}{b-1} + k\right)\cdot b^k  = |{}_{b^k}W_{b^{\ell}-1}|$ by Lemma~\ref{lem:wlen}, 
it is easy to verify that all exponents of $b$ in the formula for $a_{n,\ell}$ are nonnegative, and thus $a_{n,\ell}$ is an integer.
\end{proof}

\paragraph{\it Remark.} Since $a_{n,\ell}$ grows doubly exponentially in $\ell$, we may not be able to compute it explicitly even for relatively small values of $\ell$. 
However, we can efficiently compute $a_{n,\ell}$ modulo $q$ for a given positive integer $q$ as explained in Section~\ref{Sec4.3}.

\

It is important to note that the system~\eqref{eq:congq} has no solution when $p\mid b$. Indeed, if we assume that $m$ is a solution to the system~\eqref{eq:congq} for prime $p\mid b$ and $d=1$, 
then the number ${}_nW_{m-1} - (m-1) = {}_nW_{m-2}\cdot b^{\ell}$ is divisible by $b$ but not by $p$, a contradiction. 

Since $p\nmid b$, Theorem~\ref{th:abc} allows us to rewrite the system~\eqref{eq:congq} in the following equivalent form:
\begin{equation}\label{eq:congps}
\begin{cases}
a_{n,\ell}\cdot b^{\ell (m-b^\ell)} - (b^\ell-1)m - 1 \equiv 0\pmod{p^{d+2s}}, \\
m \equiv 0\pmod{p^d},
\end{cases}
\end{equation}
where $s:=\nu_p(b^{\ell}-1)$. From now on, we assume that the integers $n$, $\ell$, and a prime $p$ are fixed, and we are solving the system \eqref{eq:congps} with respect to $m$ for varying values of the integer $d\geq 1$.

Let $r$ be the multiplicative order of $b$ modulo $p$, and $r_1:=\frac{r}{\gcd(r,\ell)}$  the multiplicative order of $b^\ell$ modulo $p$. 
More generally, let $r_d$ be the multiplicative order of $b^\ell$ modulo $p^d$. It is clear that $r_d$ divides $r_1\cdot p^{d-1}$. 
In fact, $r_d = r_1\cdot p^{d-1-\delta}$ with $\delta\geq 0$, where $\delta=0$ unless $b^{\ell r_1}\equiv 1\pmod{p^2}$. In the latter case $p$ is a generalized Wieferich prime in base $b^\ell$ (and
such primes  are expected to be very rare).

We will consider two cases depending on whether $r_1=1$ (i.e., $r\mid \ell$) or $r_1 > 1$.
%, since this  affects our ability to compute the coefficients in Eq.~\eqref{eq:wexp} modulo $p^d$.

%%%%%%%%%%%%%%%%%%%%%%%%%%%%%%%%%%%%%%%%%%%%%%%
%%% Sec 4.2.1
%%%%%%%%%%%%%%%%%%%%%%%%%%%%%%%%%%%%%%%%%%%%%%%
\subsubsection{Case $p \nmid b^{\ell}-1$}\label{Sec4.2.1} % was Case $r_1 > 1$

In this case, $r_1 > 1$, i.e., $s=0$ in the system \eqref{eq:congps}.
It follows that the system \eqref{eq:congps} has no solution when $\nu_p(a_{n,\ell})>0$. 
So we assume that $\nu_p(a_{n,\ell})=0$ and rewrite the system \eqref{eq:congps} in the form:
\begin{equation}\label{eq:conge}
\begin{cases}
b^{\ell m} \equiv \frac{b^{\ell b^\ell}}{a_{n,\ell}} \pmod{p^d}, \\
m\equiv 0\pmod{p^d}.
\end{cases}
\end{equation}

The following lemma enables us to lift the solutions to the system \eqref{eq:conge}
from $d=1$ to solutions for $d>1$.

%%%%%%%%%%%%%%%%%%%%%%%%%%%%%%%%%%%%%%%%
%%% Lemma 4.5
%%%%%%%%%%%%%%%%%%%%%%%%%%%%%%%%%%%%%%%%
\begin{lemma}\label{lem:lift} Let $k\geq 1$ be an integer.
If there is a solution $m$ to the system \eqref{eq:conge} for $d=k$, it has the form $m\equiv p^kt\pmod{p^kr_1}$ for some $t\in \mathbb{Z}_{r_1} = \{0,\dots,r_1-1\}$. 
Moreover, when $k>1$, $m\equiv p^{k-1}t\pmod{p^{k-1}r_1}$ is a solution to the system \eqref{eq:conge} for $d=k-1$.
\end{lemma}

\begin{proof}
If a solution $m$ to the first congruence of the system \eqref{eq:conge} for $d=k$ exists, 
it is given by a discrete logarithm to base $b^{\ell}$ modulo $p^k$, and thus it represents a residue modulo $r_k$.
At the same time, a solution to the second congruence of \eqref{eq:conge} represents the zero residue modulo $p^k$. 
Since $r_k = p^{k-1-\delta} r_1$ for some integer $\delta\geq 0$ and $\gcd(r_1,p)=1$, we have $\mathrm{lcm}(r_k,p^k)=p^k r_1$ and thus by the Chinese Remainder Theorem, 
the solutions to \eqref{eq:conge} are given by $m\equiv p^kt\pmod{p^kr_1}$ for some $t\in \mathbb{Z}_{r_1}$.

Now, if $m\equiv p^kt\pmod{p^kr_1}$ is a solution to \eqref{eq:conge} for $d=k$, then this $m$ also satisfies the system \eqref{eq:conge} for $d=k-1$. 
Since $p^kt\equiv p^{k-1}t\pmod{p^{k-1}r_1}$, we conclude that $m\equiv p^{k-1}t\pmod{p^{k-1}r_1}$ is a solution to the system \eqref{eq:conge} for $d=k-1$.
\end{proof}

The following theorem gives bounds on $d$ for the system \eqref{eq:conge} to be solvable, and describes the form of the solutions.

%The following theorem establishes the solubility bound for the value of $d$ and the form of solutions to the system \eqref{eq:conge}.
%%%%%%%%%%%%%%%%%%%%%%%%%%%%%%%%%%%%%%%%
%%% Theorem 4.6
%%%%%%%%%%%%%%%%%%%%%%%%%%%%%%%%%%%%%%%%

\begin{theorem}\label{th:lift} Suppose that the prime $p$ does not divide $b$ or $a_{n,\ell}$.
Let $m\equiv pt\pmod{pr_1}$ be a solution to the system \eqref{eq:conge} for $d=1$. 
Then the system \eqref{eq:conge} is solvable for a general $d$ if and only if
$1\leq d \leq D$, and the solutions are given by $m\equiv p^dt\pmod{p^dr_1}$, where
$$D:=\nu_p\left( \left(\frac{b^{\ell b^\ell}}{a_{n,\ell}}\right)^{r_1} - 1 \right) = \nu_p\left( a_{n,\ell}^{r_1} - b^{\ell b^\ell r_1} \right).$$
\end{theorem}

\begin{proof}
Suppose that $m=m_0$ is a solution to the system \eqref{eq:conge}. Then $p^d\mid m_0$, implying that $b^{\ell m_0 r_1}\equiv 1\pmod{p^d}$ since $r_d\mid p^{d-1}r_1\mid m_0 r_1$. 
Taking the first congruence of \eqref{eq:conge} to the power $r_1$, we get
$$a_{n,\ell}^{r_1} \equiv b^{\ell b^\ell r_1}\pmod{p^d},$$
implying that $d\leq D$.

Let $g$ be a primitive root modulo $p^D$, and thus $g$ modulo $p^D$ has the order $(p-1)p^{D-1}$.
Since $\big(\frac{b^{\ell b^\ell}}{a_{n,\ell}}\big)^{r_1} \equiv 1\pmod{p^D}$, it follows that $\frac{b^{\ell b^\ell}}{a_{n,\ell}} \equiv g^{s (p-1)p^{D-1}/r_1}\pmod{p^D}$ for some integer $s$, $0\leq s<r_1$.
Furthermore, since $sp^D\equiv sp^{D-1}\pmod{r_1p^{D-1}}$, we have 
\begin{equation}\label{eq:powp}
\left(\frac{b^{\ell b^\ell}}{a_{n,\ell}}\right)^p \equiv g^{ps (p-1)p^{D-1}/r_1} \equiv g^{s (p-1)p^{D-1}/r_1} \equiv \frac{b^{\ell b^\ell}}{a_{n,\ell}}\pmod{p^D}.
\end{equation}

We will show by induction on $d$ that $m\equiv p^dt\pmod{p^dr_1}$ is a solution to the system \eqref{eq:conge} for $d=1,2,\dots,D$. For $d=1$, this is given. If $m\equiv p^kt\pmod{p^kr_1}$ is a solution to the system \eqref{eq:conge} for $d=k<D$, then by taking the first congruence of \eqref{eq:conge} to the power $p$ and using the congruence \eqref{eq:powp}, we get
$$
b^{\ell p^{k+1}t} \equiv \left( \frac{b^{\ell b^\ell}}{a_{n,\ell}}\right)^p \equiv \frac{b^{\ell b^\ell}}{a_{n,\ell}}\pmod{p^{k+1}},
$$
which implies that $m\equiv p^{k+1}t\pmod{p^{k+1}r_1}$ is a solution to the system \eqref{eq:conge} for $d=k+1$ (since $k+1\leq D$).

By Lemma~\ref{lem:lift}, there exist no other solutions to the system \eqref{eq:conge} besides those constructed from $d=1$.
\end{proof}

Theorem~\ref{th:lift} allows us to concentrate on the case $d=1$. In this case, Theorem~\ref{th:lift} implies that $D\geq 1$ is necessary for the solubility of the system \eqref{eq:conge}, which is equivalent to
$$
a_{n,\ell}^{r_1} \equiv 1\pmod{p}.
$$
This condition holds trivially when $r_1=p-1$. However, it is nontrivial when $r_1<p-1$ and can 
be used as a quick test for solubility of the system \eqref{eq:conge}. If this condition holds, 
we proceed with computing the discrete logarithm of $a_{n,\ell}$ to  base $b^\ell$ modulo $p$. 
If the logarithm exists and equals $e$, i.e., $a_{n,\ell}\equiv b^{\ell e}\pmod{p}$, then from the first 
congruence of \eqref{eq:conge} it follows that $m\equiv b^\ell - e\pmod{r_1}$. 
Combining this with the second congruence of \eqref{eq:conge}, i.e., $m\equiv 0\pmod{p}$, 
we get a solution to the system \eqref{eq:conge} as $m\equiv p(b^\ell - e)\pmod{pr_1}$,
which we can lift using Lemma~\ref{lem:lift}.

%%%%%%%%%%%%%%%%%%%%%%%%%%%%%%%%%%%%%%%%%%%%%%%%
%%%  Subsubsection 4.2.2
%%%%%%%%%%%%%%%%%%%%%%%%%%%%%%%%%%%%%%%%%%%%

\subsubsection{Case $p\mid b^{\ell}-1$.}  % was Case $r_1=1$

In this case, we have $r_1=1$, i.e., $s>0$ in the system \eqref{eq:congps}.
We will need the following lemma.

%%%%%%%%%%%%%%%%%%%%%%%%%%%%%%%%%%%%%%%%%%%%%%%%
%%%  Lemma 7
%%%%%%%%%%%%%%%%%%%%%%%%%%%%%%%%%%%%%%%%%%%%

\begin{lemma}\label{lem:binom} Let $m,u,d,s$ be positive integers and let $p$ be a prime such that $p^d\mid m$ and $p^s\mid u$. 
Then
$$(1+u)^m\equiv 1 + m u + \frac{m(m-1)}2 u^2 \pmod{p^{d+2s}},$$
which in the case $p\geq 3$ can be shortened to
$$(1+u)^m\equiv 1 + m u \pmod{p^{d+2s}}.$$
\end{lemma}

\begin{proof} Since 
$$(1+u)^m=1+m u+\frac{m(m-1)}2 u^2 + \sum_{k=3}^m \binom{m}k u^k,$$
it is enough to prove that $p^{d+2s}\mid \binom{m}{k}u^k$
for all $k$ in the range $3\leq k\leq m$. 
Consider two cases depending on whether or not $p$ divides $k$.

If $p\nmid k$, then 
$$p^{d+2s}\ \mid\ p^d u^2 \ \mid\ \frac{m}{k}\binom{m-1}{k-1}u^k = \binom{m}{k}u^k.$$

If $p\mid k$, then letting $t:=\nu_p(k)>0$, we get 
$$p^{d-t + ks}\ \mid\ \frac{m}{k}\binom{m-1}{k-1}u^k.$$
It remains to show that $d-t + ks\geq d + 2s$, i.e., $(k-2)s - t\geq 0$.
Except for the case when $p=2$ and $t=1$, $k\geq p^t$ implies
$$(k-2)s - t \geq p^t - 2 - t \geq 0.$$
On the other hand, if $p=2$ and $t=1$, from $k\geq 3$ it follows that 
$$(k-2)s - t \geq s-1 \geq 0.$$
\end{proof}

%%%%%%%%%%%%%%%%%%%%%%%%%%%%%%%%%%%%%%%%%%%%%%%%
%%%  Theorem 4.8
%%%%%%%%%%%%%%%%%%%%%%%%%%%%%%%%%%%%%%%%%%%%
\begin{theorem} Let $p \geq 3$ be a prime such that $s:=\nu_p(b^{\ell}-1) > 0$. 
Then the system \eqref{eq:congps} has a solution if and only if $d\leq D$, where
$$D := \nu_p(a_{n,\ell} - b^{\ell b^\ell}) - 2s,$$
in which case the solutions are given by $m\equiv 0\pmod{p^d}$.
\end{theorem}

\begin{proof}
If $m$ is a solution to the system \eqref{eq:congps}, then $m\equiv 0\pmod {p^d}$, which by Lemma~\ref{lem:binom} with $u := b^{\ell}-1$ implies that
$$b^{\ell m} \equiv (1+b^{\ell}-1)^m \equiv 1 + m(b^\ell - 1) \pmod{p^{d+2s}},$$
and thus we can rewrite the first congruence of system \eqref{eq:congps} as
$$\left(\frac{a_{n,\ell}}{b^{\ell b^\ell}} - 1\right)(1 + m(b^\ell - 1)) \equiv 0 \pmod{p^{d+2s}}.$$
Since the second factor is coprime to $p$ and $p\nmid b$, we conclude that $m\equiv 0\pmod {p^d}$ gives a solution to the system \eqref{eq:congps} if and only if
$$a_{n,\ell}-b^{\ell b^\ell} \equiv 0 \pmod{p^{d+2s}},$$
which concludes the proof.
\end{proof}
%%%%%%%%%%%%%%%%%%%%%%%%%%%%%%%%%%%%%%%%%%%%%%%%
%%%  Theorem 4.9
%%%%%%%%%%%%%%%%%%%%%%%%%%%%%%%%%%%%%%%%%%%%
\begin{theorem} Let $s:=\nu_2(b^{\ell}-1) > 0$. 
Then the system \eqref{eq:congps} for $p=2$ has a solution if and only if $d\leq D$, where
$$D := \nu_2(a_{n,\ell} - b^{\ell b^\ell}) - 2s + 1,$$
in which case the solutions are given by $m\equiv 0\pmod{2^{d+1}}$ if $d<D$, and by $m\equiv 2^d\pmod{2^{d+1}}$ if $d=D$.
\end{theorem}

\begin{proof}
First we notice that $s>0$ implies that $b$ is odd. 

Let $m$ be a solution to the system \eqref{eq:congps} for $p=2$. In particular, we have $\nu_2(m)\geq d$. 
Since by Lemma~\ref{lem:binom}
$$b^{\ell m} \equiv (1+b^{\ell}-1)^m \equiv 1 + m(b^\ell - 1) + \frac{m(m-1)(b^\ell - 1)^2}2 \pmod{2^{d+2s}},$$
the first congruence of system \eqref{eq:congps} for $p=2$ is equivalent to
\begin{equation}\label{eq:p2d2s}
\left(\frac{a_{n,\ell}}{b^{\ell b^\ell}}-1\right)(1 + m(b^\ell - 1)) + \frac{a_{n,\ell}}{b^{\ell b^\ell}}\frac{m(m-1)(b^\ell - 1)^2}2  \equiv 0 \pmod{2^{d+2s}}.
\end{equation}
It remains to consider two cases depending on whether $\nu_2(m)=d$ or $\nu_2(m)>d$:
\begin{itemize}
    \item If $\nu_2(m)=d$ (i.e., $m\equiv 2^d\pmod{2^{d+1}}$), then $m$ is a solution to the congruence \eqref{eq:p2d2s} if and only if $\nu_2(a_{n,\ell}-b^{\ell b^\ell})=d+2s-1$.
    \item If $\nu_2(m)>d$ (i.e., $m\equiv 0\pmod{2^{d+1}}$), then $m$ is a solution to the congruence \eqref{eq:p2d2s} if and only if $\nu_2(a_{n,\ell}-b^{\ell b^\ell})\geq d+2s$.
\end{itemize}
\end{proof}

%%%%%%%%%%%%%%%%%%%%%%%%%%%%%%%%%%%%%%%%%%%%%%%%
%%%  Section 4.3, the algorithm
%%%%%%%%%%%%%%%%%%%%%%%%%%%%%%%%%%%%%%%%%%%%
\subsection{The $C(n)$ sieve.}\label{Sec4.3}
As we saw in \textsection\ref{Sec4.1}, to find $C(n)$ we must 
solve the congruence  \eqref{eq:congm}.
%${}_nW_{m-1}\equiv 0\pmod{m}$}\label{Sec4.3}
%sieving algorithm for solving the congruence ${}_nW_{m-1}\equiv 0\pmod{m}$}\label{Sec4.3}
%\subsection{The sieving algorithm for solving the congruence ${}_nW_{m-1}\equiv 0\pmod{m}$}\label{Sec4.3}
We construct solutions to this congruence using an analog of wheel 
factorization  \cite{Prit82}  for integers in the interval $[b^{\ell-1}, b^\ell)$. 
That is, we consider $p^d$ to be a factor of an integer $m$ only if $m$ is a solution to the system \eqref{eq:congps}. 
The integers $m$ that are factored completely in this way (i.e., the product of the identified 
factors equals $m$) give the solutions to the congruence \eqref{eq:congm}.
The following is \textsc{The $C(n)$ sieve}:
\
%   This would solve the underfull vbox warning!      \begin{figure}[!h]

 %\begin{algorithm}[ht] 
\begin{algorithmic}[1] 
  \Input an integer base $b\geq 2$, an integer $n$, and an upper bound $L := b^{\ell}$ for some $\ell\geq 1$
  \Output either $C(n)=m-n-1$ with $L/b < m \leq L$, or NONE if no such $m$ exists
  \State initialize an array $T[m] = 1$ for $m$ in the interval $(L/b,L]$        % line 1
  \For   {each prime $p \leq L$} \label{line2}                % line 2
%   \State       compute $s:=\nu_p(L-1)$
   \State       $A\leftarrow \textsc{ComputeA}(n,\ell,b,p)$ \Comment{i.e., $A\leftarrow a_{n,\ell}\bmod p$}    \label{line:sd1}
   \If          {$p\mid b\quad\textbf{or}\quad A=0$}
   \State       proceed to the next value of $p$
   \EndIf
   \State       $r_1\leftarrow \mathrm{ord}_p(L)$ \Comment{i.e., multiplicative order of $L$ modulo $p$}
   \If             {$r_1>1$} 
   \If             {$A^{r_1}\not\equiv 1\pmod{p}$} 
   \State          proceed to the next value of $p$
   \EndIf
   \State       $q\leftarrow$ discrete logarithm of $A$ base $L$ modulo $p$; if it does not exist, continue to the next value of $p$
   \State       $M\leftarrow r_1\cdot p$
   \State       $m_0\leftarrow (L - q)p\,\bmod\,{M}$
   \Else   \Comment{case $r_1=1$}
     \State          $m_0\leftarrow 0$
     \If             {$p=2$} 
       \State          $M\leftarrow 4$
     \Else   
       \State          $M\leftarrow p$
     \EndIf
   \EndIf

     \If             {$m_0=0$}   \Comment{we consider only positive solutions}
       \State          $m_0\leftarrow M$
     \EndIf                                       \label{line:ed1}
   
   \State          $d\leftarrow 1$                \label{line:lss}

   \While          { $m_0\leq L$ \quad\textbf{and}\quad $(p,m_0,d)$ is a solution to the system \eqref{eq:congps} }
   
   \State          $m \leftarrow m_0 + \left\lceil\frac{L/b-m_0}{M}\right\rceil\cdot M $
   
   \While          {$m\leq L$}
  \State             $T[m] \leftarrow p \cdot T[m]$               
  \State             $m \leftarrow m + M$               
  \EndWhile
   \State          $m_0 \leftarrow m_0\cdot p$
   \State          $M \leftarrow M\cdot p$
   \State          $d \leftarrow d+1$
  \EndWhile                                       \label{line:lse}

  \If {$p=2$ \quad\textbf{and}\quad $m_0/2\leq L$ \quad\textbf{and}\quad $(p,m_0/2,d)$ is a solution to the system \eqref{eq:congps}}   \Comment{case $p=2$ and $d=D$}  \label{line:scs}
    \State $m_0\leftarrow m_0/2$
     \State          $m \leftarrow m_0 + \left\lceil\frac{L/b-m_0}{M}\right\rceil\cdot M $
   
     \While          {$m\leq L$}
      \State             $T[m] \leftarrow p \cdot T[m]$               
      \State             $m \leftarrow m + M$               
    \EndWhile
  \EndIf                                                                                               \label{line:sce}
 
  \EndFor

  \For    {$m$ from $\max\{L/b+1,n+2\}$ to $L$}       \label{line:coms}
  \If        {$T[m] = m$}       
  \State        return $C(n)=m-n-1$         \label{line:come}
  \EndIf
  \EndFor
  \State  return NONE             
 % \caption{Sieving Algorithm for $C(n)$}
\label{algo1}
\end{algorithmic}
% \end{algorithm} 
%\end{figure}
\

\paragraph{\it Remarks.} 
%\begin{itemize}
   % \item 
Lines \ref{line:sd1}-\ref{line:ed1} find $m_0$ and $M$ such that the residues $m\equiv m_0\pmod{M}$ satisfy the system \eqref{eq:congq} for $d=1$.
%\item 
In lines \ref{line:lss}-\ref{line:lse} we multiply $T[m]$ by the prime $p$  for solutions $m$ in the interval $(L/b,L]$, and incrementally lift the solutions to larger $d$. The exceptional case of $p=2$ and $d=D$ is addressed
%\item 
in lines \ref{line:scs}-\ref{line:sce}. % we address the special case $p=2$ and $d=D$ if we happen to hit it.
%\item 
In lines \ref{line:coms}-\ref{line:come}, we check if any integer $m$ in the interval $(L/b,L]$ was factored completely, and derive $C(n)$ from the smallest such $m$.
%\end{itemize}

\

Since the performance of our algorithm depends on our ability to compute $a_{n,\ell}\bmod p$,  we explain how to do this efficiently using Lemmas~\ref{lem:wlen}-\ref{lem:wgen}.

We start with a function based on Lemma~\ref{lem:wlen} that computes $|W_m|$.

\

\begin{algorithmic}[1] 
\Function{\sc LenW}{$m,b$} \Comment{computes $|W_m|$ in base $b$}
\If {$m=0$}
\State \Return 0
\EndIf
\State $\ell \leftarrow \left\lceil \log_b(m+1)\right\rceil$ \Comment{number of base-$b$ digits in $m$}
\State \Return $\ell\cdot (m+1) - (b^{\ell}-1)/(b-1)$
\EndFunction
\label{alg:lenw}
\end{algorithmic}

\

Next, we use Lemma~\ref{lem:wround} to design a function that for given integers $m,b,q$ with $b^{\ell-1}\leq m<b^\ell$ computes
${}_{b^{\ell-1}}W_m\bmod q$ in base $b$.

\

\begin{algorithmic}[1] 
\Function{\sc bWmQ}{$m,b,q$} \Comment{computes ${}_{b^{\ell-1}}W_m\bmod q$ in base $b$}
\State $\ell \leftarrow \left\lceil \log_b(m+1)\right\rceil$ \Comment{number of base-$b$ digits in $m$}
\State $d_1\leftarrow (b^{\ell}-1)^2$
\State $d_2\leftarrow 1$
\State $g\leftarrow \gcd(d_1,q)$
\While {$g>1$}
\State $d_1\leftarrow d_1 / g$
\State $d_2\leftarrow d_2 \cdot g$
\State $g \leftarrow \gcd(d_1,g)$
\EndWhile
\State $B\leftarrow b\pmod{q\cdot d_2}$ \Comment{a residue modulo $q\cdot d_2$}
\State \Return $\textsc{lift}\left((m - (m+1)\cdot B^{\ell} + (B^{2\ell-1} - B^{\ell-1}+1)\cdot B^{\ell(m+1-b^{\ell-1})}) / d_1\right)\ /\ d_2$
\EndFunction
\label{alg:valWbm}
\end{algorithmic}

\

The function {\sc bWmQ} first represents the denominator of the expression for ${}_{b^{\ell-1}}W_m$ from Lemma~\ref{lem:wround} as $(b^{\ell}-1)^2 = d_1\cdot d_2$, where $d_1$ and $d_2$ are the largest divisors of $(b^{\ell}-1)^2$ 
such that $d_1$ is co-prime to $q$ while $d_2$ is composed of prime factors dividing $q$.
Then the function defines $B$ to be a residue modulo $qd_2$ such that all computations involving $B$ are performed modulo the same number.
Namely, this function computes the numerator of the expression for ${}_{b^{\ell-1}}W_m$ divided by $d_1$ as a residue modulo $qd_2$, which is then lifted (with the  function \textsc{lift}) to an integer and divided by $d_2$.
This approach  produces the correct value for ${}_{b^{\ell-1}}W_m\bmod q$ even if $q$ is not co-prime to $b^{\ell}-1$ (i.e., when $d_2>1$).

Similarly, we use Lemma~\ref{lem:wgen} and the expression~\eqref{eq:lemma41_4} to design a recursive function that computes ${}_nW_m\bmod q$ for given integers $n,m,b,q$.

\

\begin{algorithmic}[1] 
\Function{\sc nWmQ}{$n,m,b,q$} \Comment{computes ${}_nW_m\bmod q$ in base $b$}
\State $\ell \leftarrow \left\lceil \log_b(m+1)\right\rceil$ \Comment{number of base-$b$ digits in $m$}
\State $B\leftarrow b\pmod{q}$ \Comment{a residue modulo $q$}
\State $r \leftarrow \textsc{bWmQ}(m,b,q) + \sum_{k=1}^{\ell-1} \textsc{bWmQ}(b^k-1,b,q)\cdot B^{\ell\cdot(m+1) - ((b^{\ell-k}-1)/(b-1)+k)\cdot b^k}$
\If {$n>1$}
\State $r \leftarrow r - \textsc{nWmQ}(1,n-1,b,q)\cdot B^{\textsc{LenW}(m,b)-\textsc{LenW}(n-1,b)}$
\EndIf
\State \Return $\textsc{lift}(r)$
\EndFunction
\label{alg:valWnm}
\end{algorithmic}

\

Finally, we are ready to design a function that computes $a_{n,\ell}\bmod q$ for given integers $n,\ell,b,q$. This function implements the formula:
$$a_{n,\ell} = (_nW_{b^\ell - 1} + 1) (b^\ell - 1)^2 + b^\ell,$$
which follows from Theorem~\ref{eq:wexp} for $m=b^\ell$.

\

\begin{algorithmic}[1] 
\Function{\sc ComputeA}{$n,\ell,b,q$} \Comment{computes $a_{n,\ell}\bmod q$ in base $b$}
\State $C\leftarrow b^\ell \pmod{q}$ \Comment{a residue modulo $q$}
\State \Return $\textsc{lift}\left( (\textsc{nWmQ}(n,b^\ell-1,b,q)+1) \cdot (C-1)^2 + C \right)$
\EndFunction
\label{alg:ComputeA}
\end{algorithmic}

\paragraph{\it Remark.} Although we use {\sc ComputeA} (and thus {\sc nWmQ} and {\sc bWmQ}) in our algorithm for computing $C(n)$
only for prime $q=p$, it works equally well for non-prime $q$.

\subsection{Discussion.} When the value $C(n)$ is small, less than $10^7$ (say), it can be computed directly by explicitly constructing $c_n(i)$ and testing its divisibility by $n+i+1$ for each $i=1,2,\dots$. 
In base 10, this naive search is faster than 
the $C(n)$ sieve
%the algorithm proposed in Section~\ref{Sec4.3} 
when $C(n)$ is below $10^7$ or so. 
However, for larger values of $C(n)$ the sieve gives a significant speed-up. 
It therefore makes sense to combine the two algorithms, by first running the naive search up to a certain threshold,   and then, if it was unsuccessful, switching to the sieve.
The choice for the threshold will depend on how the algorithms are implemented.\footnote{Our implementations are currently  available from \url{https://github.com/maxale/Recaman_cousin_C}}.

%%%%%%%%%%%%%%%%%%%%%%%%%%%%%%%%%%%%%
%%%%%%%%%         Acknowledgments
%%%%%%%%%%%%%%%%%%%%%%%%%%%%%%%%%%%%%

 \section*{Acknowledgments}
 We thank Michael~J.~Collins, Bradley Klee, Victor~S.~Miller, Kerry Mitchell,
 and Allan~C.~Wechsler
 for helpful comments during our work on $A(n)$,
 David~A.~Corneth, R\'{e}my Sigrist,  and
Jinyuan Wang for computing further terms in certain sequences arising in our study of $B(n)$,
 and Pierrick Gaudry for his help with the asymptotics
 of $B(n)$ and for  independently checking some of the calculations for $C(n)$.

\medskip

\noindent MSC2010: 11B83 (11D72, 11D85)

\end{document}